\newtheorem{thm}{Theorem}[section]
\newtheorem{prop}[thm]{Proposition}
\newtheorem{defn}[thm]{Definition}
\def\CC{{\bf C}}
\def\ll{{ L}}
\begin{document}
	\title[Central extensions of some nilpotent Leibniz algebras.] {Central extensions of null-filiform and naturally graded filiform non-Lie Leibniz algebras.}

\author{J.K. Adashev}
\address{[J.K. Adashev] Institute of Mathematics, National University of Uzbekistan, Tashkent, 100125, Uzbekistan}
\email{adashevjq@mail.ru}
\author{L.M. Camacho}
\address{[L.M. Camacho] Dpto. Matem\'{a}tica Aplicada I. Universidad de Sevilla. Avda. Reina Mercedes, s/n. 41012 Sevilla. (Spain)}
\email{lcamacho@us.es}
\author{B.A. Omirov}
\address{[B.A. Omirov] Institute of Mathematics, National University of Uzbekistan, Tashkent, 100125, Uzbekistan}
\email{omirovb@mail.ru}
	
\maketitle
		
\begin{abstract}
In this paper we describe central extensions of some nilpotent Leibniz algebras. Namely, central extensions of the Leibniz algebra with maximal index of nilpotency are classified. Moreover, non-split central extensions of naturally graded filiform non-Lie Leibniz algebras are described up to isomorphism. It is shown that $k$-dimensional central extensions ($k\geq 5$) of these algebras are split.
\end{abstract}

\textbf{Mathematics Subject Classification 2010}: 17A32, 17B30.
	
\textbf{Key Words and Phrases}: Leibniz algebra, filiform algebra, quasi-filiform algebra, natural gradation, characteristic sequence, 2-cocycles, central extension.
	

\section{Introduction}

Central extensions play an important role in quantum mechanics: one of the earlier encounters is by means of Wigner's theorem which states that a symmetry of a quantum mechanical system determines a (anti-) unitary transformation of a Hilbert space.

Another area of physics where one encounters central extensions is the quantum theory of conserved currents of a Lagrangian. These currents span an algebra which is closely
related to so called affine Kac-Moody algebras, which are the universal central extension of loop algebras.

Central extensions are needed in physics, because the symmetry group of a quantized system usually is a central extension of the classical symmetry group, and in the same way the corresponding symmetry Lie algebra of the quantum system is, in general, a central extension of the classical symmetry algebra. Kac-Moody algebras have been conjectured to be a symmetry groups of a unified superstring theory. The centrally extended Lie algebras play a dominant role in quantum field theory, particularly in conformal field theory, string theory and in $M$-theory.

In the theory of Lie groups, Lie algebras and their representations, a Lie algebra extension is an enlargement of a given Lie algebra $g$ by another Lie algebra $h$. Extensions arise in several ways. There is a trivial extension obtained by taking a direct sum of two Lie algebras. Other types are split extension and central extension. Extensions may arise naturally, for instance, when forming a Lie algebra from projective group representations.

A central extension and an extension by a derivation, one obtains a Lie algebra which is isomorphic with a non-twisted affine Kac-Moody algebra (\cite{bauerle}, Chapter 19). Using the centrally extended loop algebra one may construct a current algebra in two spacetime dimensions. The Virasoro algebra is the universal central extension of the Witt algebra, the Heisenberg algebra is the central extension of a commutative Lie algebra (\cite{bauerle}, Chapter 18), \cite{cahen}, \cite{szy}.

During last 20 years Leibniz algebras are studied mainly to extend the classical results from Lie algebras.
From structural point of view some classical analogues which are true for nilpotent Lie algebras are obtained.
In particular, papers devoted to the study of naturally graded nilpotent Leibniz algebras are \cite{Omirov1}, \cite{Comm-2-fil}, \cite{Comm-p-fil}, \cite{JSC},  \cite{gomez-omirov} and many others.

In the present paper we consider a notion of a central extension of Leibniz algebras, which is similar to the notion of a central extension of Lie algebras \cite{sk}. Here we consider central extensions of some nilpotent Leibniz algebras, such as null-filiform Leibniz algebra and naturally graded non-Lie filiform Leibniz algebras.
In fact, the method of central extensions of Lie algebras is adapted for Leibniz algebras in \cite{rakh}.
In works \cite{rakh}-\cite{rakh1} one-dimensional central extensions of the null-filiform Leibniz algebra and the naturally graded Lie algebra are described. The difficulties of the description of one-dimensional central extensions of a naturally graded Lie algebra show that the further progress in description of filiform Lie algebras by using central extensions is too complicated. That is why in our study we restrict to non-Lie Leibniz algebras.

The main results of this paper consists of the classification of central extensions of null-filiform Leibniz algebras and naturally graded filiform non-Lie Leibniz algebras.

Throughout the paper all vector spaces and algebras are finite-dimensional over the field of complex numbers. Moreover, in the table of multiplication of an algebra the omitted products are assumed to be zero.

\section{Preliminaries}

Let $\mathbb{F}$ be an algebraically closed field of characteristic 0.
\begin{defn}\cite{loday}
	An algebra  $L$ over a field $\mathbb{F}$ is called a Leibniz algebra if for any elements $x, y, z \in L$
	the so-called Leibniz identity is satisfied:
	$$[x, [y, z]] = [[x, y], z] - [[x, z], y],$$
	where $[-,-]$ is a multiplication in $L.$
\end{defn}

Let $L$ be a finite-dimensional Leibniz algebra. For this algebra we define sequence:
$$L^{1}=L, \quad L^{k+1}=[L^{k}, L^{1}],  \quad k \geq 1.$$

A Leibniz algebra $L$ is called nilpotent if there exists
$s\in N$ such that $L^s = \{0\}.$ The minimal number $s$ possessing of such property  is called index of nilpotency of the algebra $L$.

\begin{defn} \cite{Omirov1} A Leibniz algebra $L$ of dimension $n$ is called null-filiform if $dimL^i = (n + 1) - i,  \ 1 \leq i \leq n + 1.$
\end{defn}

Evidently, by definition algebra being null-filiform is equivalent that it has maximal possible index of nilpotency.

\begin{thm}\cite{Omirov1} In arbitrary $n$-dimensional null-filiform Leibniz algebra there exists a basis $\{e_1, e_2,\dots, e_n\}$ such that the table of multiplication of the algebra is in the following form:
	$$NF_n: \ [e_i,e_1]=
	e_{i+1}, \ \  1\leq i\leq n-1.$$
\end{thm}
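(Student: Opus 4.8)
The plan is to construct the desired basis from a flag adapted to the lower central series and then use the Leibniz identity to force all products other than $[e_i,e_1]$ to vanish. Since $\dim L^i-\dim L^{i+1}=1$ for $1\le i\le n$, the series $L=L^1\supset L^2\supset\cdots\supset L^n\supset L^{n+1}=\{0\}$ has one-dimensional successive quotients. First I would fix an element $e_1\in L\setminus L^2$ and define recursively $e_{i+1}:=[e_i,e_1]$; since $[L^i,L^1]=L^{i+1}$, automatically $e_{i+1}\in L^{i+1}$. The goal of the first half of the argument is to prove that $\set{e_1,\dots,e_n}$ is a basis, i.e. that each $e_i$ lies in $L^i\setminus L^{i+1}$.

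The key step, and the one I expect to carry the whole construction, is the inclusion $[L^i,L^2]\subseteq L^{i+2}$. This follows directly from the Leibniz identity: for $x\in L^i$ and $y,z\in L$ one has $[x,[y,z]]=[[x,y],z]-[[x,z],y]$, and both terms on the right lie in $[L^{i+1},L]=L^{i+2}$. Writing $L=\langle e_1\rangle\oplus L^2$, this gives $L^{i+1}=[L^i,L]=[L^i,e_1]+L^{i+2}$, so modulo $L^{i+2}$ the one-dimensional space $L^{i+1}/L^{i+2}$ is generated by $[L^i,e_1]$. An induction on $i$ then shows that if $e_i\notin L^{i+1}$ then $e_{i+1}=[e_i,e_1]\notin L^{i+2}$: indeed $[L^i,e_1]=\langle e_{i+1}\rangle+[L^{i+1},e_1]$ and $[L^{i+1},e_1]\subseteq L^{i+2}$, so $e_{i+1}$ generates $L^{i+1}/L^{i+2}$. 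Hence each $e_i$ represents the nonzero class in $L^i/L^{i+1}$, the vectors are linearly independent, and $\set{e_1,\dots,e_n}$ is a basis; note $[e_n,e_1]=e_{n+1}\in L^{n+1}=\{0\}$.

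It remains to show that all remaining products vanish, that is $[e_i,e_j]=0$ for $j\ge2$. Writing $c_{i,j}:=[e_i,e_j]$ and using $e_j=[e_{j-1},e_1]$, the Leibniz identity yields the recursion $c_{i,j}=[c_{i,j-1},e_1]-c_{i+1,j-1}$. Starting from $c_{i,1}=e_{i+1}$ one gets $c_{i,2}=[e_{i+1},e_1]-e_{i+2}=e_{i+2}-e_{i+2}=0$, and then $c_{i,j}=0$ for all $j\ge3$ by induction on $j$, since both summands vanish. This closes the table of multiplication to exactly $NF_n$. The only real subtlety is bookkeeping at the top of the flag (the conventions $e_{n+1}=e_{n+2}=0$), which is harmless because $L^{n+1}=\{0\}$; the genuine content is concentrated in the inclusion $[L^i,L^2]\subseteq L^{i+2}$ established above.
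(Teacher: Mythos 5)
Your proof is correct. Note that the paper itself gives no proof of this theorem --- it is quoted from \cite{Omirov1} --- and your argument is essentially the standard one from that source: $e_1$ generates $L$, the Leibniz identity yields $[L^i,L^2]\subseteq L^{i+2}$ so that $e_{i+1}=[e_i,e_1]$ spans $L^{i+1}$ modulo $L^{i+2}$, and the recursion $[e_i,e_j]=[[e_i,e_{j-1}],e_1]-[e_{i+1},e_{j-1}]$ forces all products with $j\geq 2$ to vanish.
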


\begin{defn}\cite{Omirov1} A Leibniz algebra $L$ is called filiform if $dim L^i = n-i, \ 2 \leq i \leq n.$
\end{defn}

Note that the nilindex of a filiform Leibniz algebras coincides with its dimension.

Let $L$ be a Leibniz algebra of nilindex $s.$ We put
$L_i=L^i/L^{i+1},  \ \ 1\leq i\leq s-1.$ Then we obtain a graded algebra $GrL=L_1\oplus L_2\oplus\cdots\oplus L_{s-1},$ where $[L_i,L_j]\subseteq L_{i+j}.$ An algebra $L$ is called a naturally graded algebra if $L\cong GrL.$

In the following theorem we summarize the results of the works \cite{Omirov1}, \cite{Ve}.

\begin{thm} An arbitrary complex $n$-dimensional naturally graded filiform
Leibniz algebra is isomorphic to one of the following pairwise non-isomorphic algebras:
$$F^1_n: \begin{array}{ll}
[e_1,e_1]=e_{3},& [e_i,e_1]=e_{i+1},  \  2\leq i \leq {n-1},
\end{array}$$
$$F^2_n: \begin{array}{ll}
[e_1,e_1]=e_{3}, &  [e_i,e_1]=e_{i+1},  \  3\leq i \leq {n-1},
\end{array}$$
$$F^3_n: \begin{array}{lll} [e_i,e_1]=-[e_1,e_i]=e_{i+1}, &
2\leq i \leq {n-1}\\[1mm]
[e_i,e_{n+1-i}]=-[e_{n+1-i},e_i]=\alpha (-1)^{i+1}e_n, & 2\leq i\leq
n-1,\end{array}$$
where $\alpha\in\{0,1\}$ for even $n$ and $\alpha=0$ for odd $n.$
\end{thm}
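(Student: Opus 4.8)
The plan is to start from the natural gradation $L \cong GrL = L_1 \oplus L_2 \oplus \cdots \oplus L_{n-1}$ and first pin down the dimensions of the homogeneous components. From the filiform condition $\dim L^i = n-i$ for $2 \le i \le n$, together with $\dim L^1 = n$, I would compute $\dim L_1 = \dim L^1 - \dim L^2 = 2$ and $\dim L_i = \dim L^i - \dim L^{i+1} = 1$ for $2 \le i \le n-1$. Thus the algebra has a two-dimensional generating space $L_1$ sitting over a chain of one-dimensional pieces, and since the gradation is the one induced by the lower central series $L^{k+1}=[L^k,L^1]$, one has $[L_i, L_1] = L_{i+1}$ in $GrL$, so that $L_1$ generates $L$.

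Next I would choose a homogeneous basis adapted to this structure: fix a basis $\{e_1,e_2\}$ of $L_1$ and a generator $e_{i+1}$ of $L_i$ for $2 \le i \le n-1$. Because each $L_{i+1}$ with $i\ge 2$ is one-dimensional and nonzero, right multiplication by a suitable degree-one element must climb the chain. The key preliminary step is to exhibit $z \in L_1$ whose right-multiplication operator $R_z$ maps $L_i$ isomorphically onto $L_{i+1}$ for all $2 \le i \le n-2$; this is precisely the assertion that $L$ has characteristic sequence $(n-1,1)$. After a linear change inside $L_1$ I may take $z = e_1$ and set $e_{i+1} := [e_i,e_1]$ for $3 \le i \le n-1$, so that the long chain $[e_i,e_1]=e_{i+1}$ is already normalized. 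The remaining unknowns are then the four degree-one products $[e_a,e_b]$ with $a,b \in \{1,2\}$, each a scalar multiple of $e_3$, and the products $[e_i,e_2]$ for $i \ge 3$, each a scalar multiple of $e_{i+1}$.

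I would then impose the Leibniz identity on all relevant triples of basis vectors to obtain recursions among these structure constants. For instance, evaluating the identity on $(e_i,e_1,e_1)$ forces $[e_i,[e_1,e_1]]=0$, while the triples $(e_i,e_1,e_2)$ and $(e_i,e_2,e_1)$ relate the coefficient of $[e_i,e_2]$ to coefficients lower in the chain; propagating these relations downward expresses every structure constant in terms of the four degree-one coefficients and a single top-degree coefficient. Finally I would use the residual freedom of graded automorphisms — arbitrary invertible changes of $\{e_1,e_2\}$ in $L_1$ together with the induced rescalings of the $e_i$ — to normalize the degree-one products. The trichotomy arises according to the rank and symmetry type of the induced bilinear map $L_1 \times L_1 \to L_2$, producing the two genuinely Leibniz normal forms $F^1_n$ and $F^2_n$ and the antisymmetric, Lie-type form $F^3_n$.

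The main obstacle is this final normalization stage, and within it the determination of the top-degree parameter $\alpha$ in $F^3_n$. Once antisymmetry is forced, the only surviving non-chain product is $[e_i,e_{n+1-i}]$, and the Leibniz identity (which in the antisymmetric case reduces to Jacobi) together with the alternating signs $(-1)^{i+1}$ constrains its coefficient: a parity computation shows that a nonzero $\alpha$ is consistent only when $n$ is even, and a suitable rescaling then reduces $\alpha$ to $\{0,1\}$. I would conclude by verifying that the three families are pairwise non-isomorphic, most cleanly by comparing invariants such as the characteristic sequence and the dimension of the subspace of antisymmetric products, so that no two of $F^1_n$, $F^2_n$, $F^3_n$ can be identified.
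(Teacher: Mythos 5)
First, a point of reference: the paper itself gives no proof of this theorem; it is explicitly a summary of results from \cite{Omirov1} and \cite{Ve}, so your attempt can only be compared with the strategy of those works. Your setup does mirror it faithfully: the computation $\dim L_1=2$, $\dim L_i=1$ for $2\leq i\leq n-1$, the choice of a chain-climbing generator $e_1$ (which over $\mathbb{C}$ exists because the elements of $L_1$ annihilating some $L_i$ form finitely many proper subspaces), and the reduction of the unknowns to $[e_a,e_b]$, $a,b\in\{1,2\}$, and $[e_i,e_2]$, $i\geq 3$, are all sound; the last reduction is legitimate because $e_i=[e_{i-1},e_1]$ gives $R_{e_i}=R_{e_1}R_{e_{i-1}}-R_{e_{i-1}}R_{e_1}$, so all remaining products are recursively determined by $R_{e_1}$ and $R_{e_2}$.

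However, the decisive steps are asserted rather than proved, and two of them, as stated, would fail. First, the trichotomy does not ``arise according to the rank and symmetry type'' of the map $L_1\times L_1\to L_2$ by itself: one must invoke the Leibniz identity to eliminate the types that do not occur. For example, the rank-two symmetric choice $[e_1,e_1]=e_3$, $[e_2,e_2]=-e_3$, $[e_1,e_2]=[e_2,e_1]=0$ is a perfectly good graded bilinear map, and it is killed only by the identity on the triple $(e_2,e_2,e_1)$, which forces $0=[e_2,[e_2,e_1]]=[[e_2,e_2],e_1]=-e_4$; your sketch contains no mechanism for these eliminations. Second, the claim that once the degree-one part is antisymmetric ``the Leibniz identity reduces to Jacobi'' is unjustified: antisymmetry of the component $L_1\times L_1\to L_2$ does not make the whole algebra Lie. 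Indeed, with $[e_2,e_1]=-[e_1,e_2]=e_3$ and $[e_i,e_2]=b_ie_{i+1}$ one computes $[e_3,e_3]=[e_3,[e_2,e_1]]=(b_3-b_4)e_5$, which need not vanish a priori; proving that the algebra is Lie, so that Vergne's classification (and the parity constraint on $\alpha$) applies, requires exactly the recursion analysis of the constants $b_i$ that you skip. The same analysis is what shows, in the two rank-one cases, that all $[e_i,e_2]$ can be normalized to zero --- this is precisely where natural gradedness does its work, since without it one obtains the multi-parameter families $F_n^1(\alpha_4,\dots,\alpha_n,\theta)$ and $F_n^2(\beta_4,\dots,\beta_n,\gamma)$ of the subsequent theorem in the paper, so it cannot be dismissed with ``propagating these relations downward.'' In short, your proposal is a correct roadmap whose content --- which symmetry types survive, why the antisymmetric case is Lie, and why the higher constants die --- is exactly what remains to be proved.
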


It is known that the set of filiform Leibniz algebras is decomposed into three disjoint classes, two families of non-Lie Leibniz algebras which arise from the algebras $F_n^1$ and $F_n^2$ and one family of algebras (which include filiform Lie algebras) arises from the Lie algebra $F_n^3$ \cite{gomez-omirov}.

Since further we consider only filiform non-Lie Leibniz algebras we omit the third family of algebras.

\begin{thm} Any $(n+1)$-dimensional complex non-Lie filiform Leibniz algebra, whose naturally graded algebra is not a Lie algebra, belongs to one of the following two classes:
$$\begin{array}{l}
F_n^1(\alpha_4,\alpha_5, \dots, \alpha_n, \theta
):\left\{\begin{array}{ll}
  [e_1,e_1]=e_{3}, &  \\[1mm]
  [e_i,e_1]=e_{i+1}, & 2\leq i \leq {n-1}, \\[1mm]
  [e_1,e_2]=\sum\limits_{k=4}^{n-1}\alpha_{k}e_k+\theta e_n, &  \\[1mm]
  [e_i,e_2]=\sum\limits_{k=i+2}^n\alpha_{k+1-i}e_k, &  \ 2\leq i \leq
{n-2}, \\
\end{array}
  \right.\\[15mm]
 F_n^2(\beta_4,\beta_5, \dots, \beta_n, \gamma ):\left\{
\begin{array}{ll}
  [e_1,e_1]=e_{3}, &  \\[1mm]
  [e_i,e_1]=e_{i+1}, & 3\leq i \leq {n-1}, \\[1mm]
  [e_1,e_2]=\sum\limits_{k=4}^{n}\beta_{k}e_k, &\\[1mm]
  [e_2,e_2]= \gamma e_n,   \\[1mm]
  [e_i,e_2]=\sum\limits_{k=i+2}^n\beta_{k+1-i}e_k, &  \ 3\leq i \leq
{n-2}. \\
\end{array}
  \right.
\end{array}$$
\end{thm}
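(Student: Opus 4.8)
The plan is to reduce the problem to the determination of the admissible ``deformations'' of a fixed naturally graded algebra. Let $L$ be a non-Lie filiform Leibniz algebra whose associated graded algebra $GrL$ is not a Lie algebra. By the previous theorem the only naturally graded filiform Leibniz algebras are $F_n^1$, $F_n^2$ and the Lie algebra $F_n^3$; discarding the Lie case leaves $GrL\cong F_n^1$ or $GrL\cong F_n^2$, and this dichotomy already accounts for the two classes in the statement. It therefore suffices, in each case, to describe all Leibniz multiplications on $L$ whose natural gradation recovers the corresponding $GrL$.

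First I would fix a basis adapted to the lower central series $L\supseteq L^2\supseteq\cdots$. Choosing generators $e_1,e_2\notin L^2$ as in $GrL$, and using that right multiplication by $e_1$ acts on $GrL$ as the shift $e_i\mapsto e_{i+1}$, I would define the remaining basis vectors inductively by $e_3:=[e_1,e_1]$ and $e_{i+1}:=[e_i,e_1]$. This normalises all products with $e_1$ exactly and confines the whole remaining freedom to the products involving $e_2$, which I write as $[e_1,e_2]=\sum_{k\ge 3}\alpha_k e_k$ and $[e_i,e_2]=\sum_{k>i+1}c_{i,k}e_k$ (together with $[e_2,e_2]$ in the $F_n^2$ case), each correction term lying in a strictly deeper term of the filtration than the leading term prescribed by $GrL$. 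The two classes are separated here by the value $[e_2,e_1]=e_3$ (type $F_n^1$) versus $[e_2,e_1]=0$ (type $F_n^2$), read off directly from $GrL$.

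Next I would impose the Leibniz identity. The decisive computation rewrites $[e_{i+1},e_2]=[[e_i,e_1],e_2]$ through the rearranged identity $[[e_i,e_1],e_2]=[e_i,[e_1,e_2]]+[[e_i,e_2],e_1]$. Once one checks, again from Leibniz applied to the triples $(e_i,e_1,e_k)$, that the products $[e_i,e_k]$ with $k\ge 3$ vanish, the term $[e_i,[e_1,e_2]]$ drops out and the relation becomes $[e_{i+1},e_2]=[[e_i,e_2],e_1]$: each $e_2$-product is the $e_1$-shift of the previous one. Solving this recursion forces exactly the shifted pattern $[e_i,e_2]=\sum_{k}\alpha_{k+1-i}e_k$ displayed in the statement, so that a single string of coefficients governs all of these products. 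The remaining triples, chiefly $(e_1,e_1,e_2)$ and $(e_2,e_2,e_1)$, pin down the admissible coefficients, normalise the lowest one to zero, and leave a single extra degree of freedom at the top of the chain, which becomes the distinguished parameter $\theta$ (respectively $\gamma$).

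Finally I would eliminate the surviving inessential parameters by an adapted change of basis of the form $e_2\mapsto e_2+\sum_{k\ge 3}\lambda_k e_k$, together with a rescaling of $e_1$ and the corresponding regeneration of the $e_{i+1}$; such a transformation moves the lowest correction coefficients by an invertible affine map and reduces the data to the canonical tuples $(\alpha_4,\dots,\alpha_n,\theta)$ and $(\beta_4,\dots,\beta_n,\gamma)$. The main obstacle is the simultaneous bookkeeping: one must impose the Leibniz identity on all relevant triples, solve the resulting coupled polynomial relations, and synchronise this with the freedom of adapted basis changes so as to confirm that no further parameters survive in either family. The filiform shift structure is precisely what keeps this tractable, since it collapses the entire multiplication to the action of $e_1$ together with one column of $e_2$-products.
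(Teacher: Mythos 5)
The paper contains no proof of this theorem to compare against: it is stated as a summary of known results and attributed to \cite{Omirov1} and \cite{gomez-omirov}. Your sketch is essentially the argument used in those sources, and its key steps check out: the rearranged identity $[[e_i,e_1],e_2]=[e_i,[e_1,e_2]]+[[e_i,e_2],e_1]$ is the correct form of the Leibniz identity; the vanishing $[e_i,e_k]=0$ for $k\geq 3$ does follow inductively (base case $[e_i,[e_1,e_1]]=[[e_i,e_1],e_1]-[[e_i,e_1],e_1]=0$); and the triples you single out do exactly what you claim, namely $(e_1,e_1,e_2)$ and $(e_2,e_2,e_1)$ force $[e_2,e_2]-[e_1,e_2]\in\langle e_n\rangle$ in the first class (producing $\theta$) and $[e_2,e_2]=\gamma e_n$ in the second. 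Two points need more care than you give them. First, $GrL$ determines $[e_2,e_1]$ only modulo $L^3$, not ``directly'': you must still perform the (easy) correction $e_2\mapsto e_2-\sum_{k\geq 4}c_ke_{k-1}$ to achieve $[e_2,e_1]=e_3$, respectively $[e_2,e_1]=0$, exactly; similarly, the absence of an $e_3$-component in $[e_1,e_2]$ and $[e_2,e_2]$ is where the hypothesis $GrL\cong F_n^1$ or $F_n^2$ genuinely enters, since choosing $e_1,e_2$ as preimages of the standard graded generators places these products in $L^3$ from the start. Second, your recursion, carried out to the end, yields $[e_i,e_2]=\sum_{k=i+2}^{n}\alpha_{k+2-i}e_k$, so that for $i=2$ the coefficients are $\alpha_4,\dots,\alpha_n$; this is internally consistent with the declared parameter list, whereas the formula $\alpha_{k+1-i}$ displayed in the statement would involve an undeclared parameter $\alpha_3$ and is an indexing slip inherited from the zero-indexed, $(n+1)$-dimensional form of the theorem in \cite{gomez-omirov}. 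So your derivation is correct, and in this respect more careful than the transcription in the paper.
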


\begin{defn} An algebra $L$ is called quasi-filiform if $L^{n-2}\neq\{0\}$ and
	$ L^{n-1}=\{0\},$ where $dimL=n.$
\end{defn}


Let $x$ be a nilpotent element of the set $\ll\setminus \ll^2$. For the
nilpotent operator of right multiplication $R_{x}$ we define a decreasing
sequence $C(x)=(n_{1}, n_{2}, \dots, n_{k})$, which consists of the dimensions
of Jordan blocks of the operator $R_{x}$. In the set of such sequences we
consider the lexicographic order, that is, $C(x)=(n_{1}, n_{2},\dots, n_{k})$
$\leq $ $C(y)=(m_{1}, m_{2},\dots, m_{s})$ if and only if there exists
$i\in \mathbb{N}$ such that $n_{j}=m_{j}$ for any  $j < i$ and $n_{i} < m_{i}$.

\begin{defn}
The sequence $C(\ll)=\mbox{max C(x)}_{x\in \ll\backslash \ll^2}$ is called
characteristic sequence of the algebra $\ll$.
\end{defn}

\begin{defn}
A quasi-filiform non Lie Leibniz algebra $\ll$ is called algebra of
type I, if there exists an element $x\in \ll\backslash
\ll^2$ such that the operator $R_x$ has the form:
$\left(\begin{array}{ll}
J_{n-2}&0\\
0&J_{2}
\end{array}\right).$
\end{defn}

Combining the results of the papers \cite{Comm-2-fil} and \cite{JSC} we have the following theorem.

\begin{thm}
Let $\ll$  be a naturally graded quasi-filiform Leibniz algebra of type $I$. Then
it is isomorphic to the one of the following pairwise non-isomorphic
algebras:
$$NF_{n-2}\oplus \mathbb{C}^2; \ \ F_{n-1}^1\oplus \mathbb{C};$$
$$\small\begin{array}{ll}

L_n^1: \left\{\begin{array}{ll}
[e_i,e_1]=e_{i+1},& 1\leq i\leq n-3,\\{}
[e_1,e_{n-1}]=e_n+e_2,& \\{}
[e_i,e_{n-1}]=e_{i+1},& 2\leq i\leq n-3.
\end{array}\right.&
L_n^2: \left\{\begin{array}{ll}
[e_i,e_1]=e_{i+1},& 1\leq i\leq n-3,\\{}
[e_1,e_{n-1}]=e_n.&
\end{array}\right.
\end{array}$$
$$\small\begin{array}{ll}
	\ll^{1,\lambda}_n: \left\{
	\begin{array}{l}
		[e_i,e_1]=e_{i+1},\ 1\leq i \leq n-3\\{} [e_{n-1},e_1]=e_{n},\\{}
		[e_1,e_{n-1}]=\lambda e_{n},\ \lambda\in \CC
	\end{array}\right.&
	\ll^{2,\lambda}_n: \left\{
	\begin{array}{l}
		[e_i,e_1]=e_{i+1},\ 1\leq i \leq n-3\\{} [e_{n-1},e_1]=e_{n},\\{}
		[e_1,e_{n-1}]=\lambda e_{n},\ \lambda\in \{0,1\}\\{}
		[e_{n-1},e_{n-1}]=e_n
	\end{array}\right.\\[7mm]
	
	\ll^{3,\lambda}_n: \left\{
	\begin{array}{l}
		[e_i,e_1]=e_{i+1},\ 1\leq i \leq n-3\\{}
		[e_{n-1},e_1]=e_{n}+e_2,\\{} [e_1,e_{n-1}]=\lambda e_{n},\
		\lambda\in \{-1,0,1\}
	\end{array}\right.&
	
	\ll^{4,\lambda}_n: \left\{
	\begin{array}{l}
		[e_i,e_1]=e_{i+1},\ 1\leq i \leq n-3\\{}
		[e_{n-1},e_1]=e_{n}+e_2,\\{} [e_{n-1},e_{n-1}]=\lambda e_n,\
		\lambda\neq 0
	\end{array}\right.\\[7mm]
	
	\ll^{5,\lambda,\mu}_n: \left\{
	\begin{array}{l}
		[e_i,e_1]=e_{i+1},\ 1\leq i \leq n-3\\{} [e_{n-1},e_1]=e_{n}+e_2,\\{}
		[e_1,e_{n-1}]=\lambda e_{n},\ (\lambda,\mu)=(1,1)\ or\ (2,4)\\{}
		[e_{n-1},e_{n-1}]=\mu e_n,
	\end{array}\right.&
	\ll^{6}_n: \left\{
	\begin{array}{l}
		[e_i,e_1]=e_{i+1},\ 1\leq i \leq n-3\\{} [e_{n-1},e_1]=e_{n},\
		\\{} [e_1,e_{n-1}]=- e_{n},\\{} [e_{n-1},e_{n-1}]=e_2,\\{}
		[e_{n-1},e_n]=e_3.
	\end{array}\right.
\end{array}$$

\end{thm}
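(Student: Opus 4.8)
The plan is to reconstruct the multiplication of $\ll$ from the Jordan structure of a single right-multiplication operator and the constraints of the natural gradation, and then reduce to canonical form by graded changes of basis. First I would fix an element $x\in \ll\setminus \ll^2$ for which $R_x$ has the block form $\mathrm{diag}(J_{n-2},J_2)$ and read off the two Jordan chains: a long chain $e_1=x,e_2,\dots,e_{n-2}$ with $[e_i,e_1]=e_{i+1}$ for $1\le i\le n-3$, and a short chain headed by a second generator $e_{n-1}$ with $e_n=[e_{n-1},e_1]$. Since $\ll$ is naturally graded of nilindex $n-1$, I would place $e_i\in\ll_i$ along the long chain, $e_{n-1}\in\ll_1$ and $e_n\in\ll_2$; a dimension count across $\ll=\ll_1\oplus\cdots\oplus\ll_{n-2}$ then forces the profile $\dim\ll_1=\dim\ll_2=2$ and $\dim\ll_i=1$ for $3\le i\le n-2$. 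The decomposable algebras $NF_{n-2}\oplus\CC^2$ and $F^1_{n-1}\oplus\CC$ occupy a separate, easier branch, namely the degenerate configuration in which the two chains do not couple and $\ll$ splits off an abelian or filiform factor; these are disposed of first.

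In the coupled case, using $[\ll_i,\ll_j]\subseteq\ll_{i+j}$ together with the fact that every homogeneous component of degree $\ge 3$ is one-dimensional, I would write the most general graded multiplication table. All products are then determined up to scalars except the handful landing in the two-dimensional component $\ll_2=\langle e_2,e_n\rangle$, namely $[e_{n-1},e_1]=e_n+\alpha e_2$, $[e_1,e_{n-1}]=\lambda e_n+\beta e_2$ and $[e_{n-1},e_{n-1}]=\mu e_n+\gamma e_2$, along with the products $[e_i,e_{n-1}]$ and $[e_{n-1},e_i]$ in higher degrees. The engine of the proof is then the Leibniz identity $[u,[v,w]]=[[u,v],w]-[[u,w],v]$ imposed on all triples of basis vectors. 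The key mechanism is that applying it to triples of the form $(e_i,e_1,e_{n-1})$ and $(e_{n-1},e_i,e_1)$ propagates the scalars $\lambda$ and $\mu$ down the long chain, so that each $[e_i,e_{n-1}]$ and $[e_{n-1},e_i]$ is expressed through $\lambda,\mu$ and the already-fixed constants, and at the same time yields polynomial relations coupling $\alpha,\beta,\gamma,\lambda,\mu$.

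I would then normalize the surviving parameters by the graded automorphisms that remain available: rescalings $e_1\mapsto s\,e_1$, $e_{n-1}\mapsto t\,e_{n-1}$ with the induced scalings along the chains, and gradation-preserving shifts $e_{n-1}\mapsto e_{n-1}+c\,e_1$ and $e_n\mapsto e_n+c'e_2$. Tracking how these act on $(\alpha,\beta,\gamma,\lambda,\mu)$ lets one clear the $e_2$-coefficients in the generic orbits, while in the degenerate orbits they cannot be fully cleared; solving the resulting normalization equations is precisely what produces the discrete values $\lambda\in\{-1,0,1\}$, $\lambda\in\{0,1\}$, and $(\lambda,\mu)\in\{(1,1),(2,4)\}$ recorded in the families $\ll^{1,\lambda}_n,\dots,\ll^{6}_n$. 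The main obstacle, and where the bulk of the labor sits, is exactly this normalization: one must organize the Leibniz relations so the quadratic constraint tying $\lambda$ to $\mu$ is solved cleanly, and one must keep precise account of which base changes survive after each reduction, so as to neither merge distinct orbits nor split a single one.

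Finally I would check that the listed algebras are pairwise non-isomorphic by exhibiting separating invariants: the characteristic sequence together with the decomposable/indecomposable dichotomy distinguishes the first two families from the rest and from each other, while the scalars $\lambda$ and $\mu$, which are invariant under the residual scaling freedom, separate the members inside each family. Assembling the gradation-driven reconstruction, the Leibniz relations, and this orbit analysis gives the stated list, in agreement with the results combined from \cite{Comm-2-fil} and \cite{JSC}.
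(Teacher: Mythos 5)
Your proof reconstructs only half of the statement, and the half it misses is exactly why the paper obtains this theorem by combining two different references. Your very first step --- fixing $x$ with $R_x=\mathrm{diag}(J_{n-2},J_2)$ and setting $e_n:=[e_{n-1},e_1]$ --- bakes in $[e_{n-1},e_1]\neq 0$ for the rest of the argument (your general graded table reads $[e_{n-1},e_1]=e_n+\alpha e_2$, never $0$), so every algebra your scheme can output has a genuine $J_2$-block, i.e.\ characteristic sequence $(n-2,2)$. But four algebras in the list, namely $NF_{n-2}\oplus\mathbb{C}^2$, $F^1_{n-1}\oplus\mathbb{C}$, $L^1_n$ and $L^2_n$, have characteristic sequence $(n-2,1,1)$: in $L^2_n$, for instance, any $x=\sum a_ie_i$ with $a_1\neq 0$ has $\ker R_x=\langle e_{n-2},e_{n-1},e_n\rangle$, which is $3$-dimensional, so $R_x$ has three Jordan blocks and the form $\mathrm{diag}(J_{n-2},J_2)$ is never attained; the same computation works for the other three. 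These four algebras are therefore invisible to your construction. Your plan to recover the decomposable ones as the ``degenerate, non-coupled branch'' contradicts your own hypothesis --- with $[e_{n-1},e_1]=e_n\neq 0$ the two chains are always coupled, so that branch is empty --- and the indecomposable $L^1_n$, $L^2_n$ are not addressed anywhere. Relatedly, your claim that a dimension count forces $\dim L_1=\dim L_2=2$ is false for $NF_{n-2}\oplus\mathbb{C}^2$ and $F^1_{n-1}\oplus\mathbb{C}$, which have $\dim L_1=3$.

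What you outline is, in substance, the proof of the $C(L)=(n-2,2)$ classification in \cite{JSC}, and carried out it would indeed produce $L^{1,\lambda}_n,\dots,L^{6}_n$ with the normalizations $\lambda\in\{-1,0,1\}$, $\lambda\in\{0,1\}$ and $(\lambda,\mu)\in\{(1,1),(2,4)\}$. The paper's own justification of the theorem is precisely ``combine \cite{JSC} with \cite{Comm-2-fil}'': the latter classifies naturally graded $2$-filiform Leibniz algebras (characteristic sequence $(n-2,1,1)$) and supplies the four algebras you cannot reach. To close the gap you must either weaken your starting hypothesis so that the small block of $R_x$ may degenerate into $J_1\oplus J_1$ --- that is, allow $[e_{n-1},e_1]=0$ and redo the graded analysis for that case, including the gradation profiles with $\dim L_1=3$ --- or invoke/reprove the $2$-filiform classification as a separate case. (Note that the paper's stated definition of ``type I'' literally excludes these four algebras as well; whichever convention you adopt, this discrepancy has to be confronted rather than glossed over.)
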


\

{\bf Central extensions of Leibniz algebras.}
It is remarkable that any nilpotent Leibniz algebra has non-trivial center. Let $L$ be a Leibniz algebra over a field $\mathbb{F}$ and $Z(L)$ be its center.

{\it The central extension of a Leibniz algebra $L$ by $V$} is called a short exact sequence of Leibniz algebras

$$0\longrightarrow V\mathop{\longrightarrow}\limits^{\varepsilon} \widehat{L} \mathop{\longrightarrow}\limits^{\lambda} L\longrightarrow0,$$
where $Im \varepsilon= Ker \lambda$ and $V$ is the center of the algebra $\widehat{L}$.

Two central extensions
$$0\longrightarrow V\mathop{\longrightarrow}\limits^{\epsilon} \widehat{L} \mathop{\longrightarrow}\limits^{\lambda}L\longrightarrow0
\ \ \mbox{and} \ \  0\longrightarrow V
\mathop{\longrightarrow}\limits^{\epsilon^\prime}
 \widehat{L}^\prime
\mathop{\longrightarrow}\limits^{\lambda^\prime}
L\longrightarrow0$$ are called {\it equivalent} if there exists an isomorphism $\varphi :\widehat{L} \longrightarrow
\widehat{L}^\prime$ such that $\varphi\circ\epsilon=\epsilon^\prime,  \
\lambda^\prime\circ\varphi=\lambda.$

A bilinear map $\theta: L\times L\longrightarrow V$ satisfying an identity
$$\theta(x, [y, z])=\theta([x,y],z)-\theta([x,z],y),$$
for any elements $x,y,z\in L$ is called {\it central 2-cocycle}.

We denote by $ZL^2(L,V)$ the set of all central $2$-cocycles from $L$ to $V$. If for linear map $\varphi:L\longrightarrow V$ we have $\theta(x,y)=\varphi([x,y]),$ then $\theta$ is called {\it $2$-coboundary.} We denote by $BL^2(L,V)$ the set of all $2$-coboundaries from $L$ to $V$. The quotient space $HL^2(L,V):=ZL^2(L,V)/BL^2(L,V)$ is called {\it the 2-nd group of cohomology.}
If $\theta_1-\theta_2$ is coboundary, then $\theta_1$ and $\theta_2$ are {\it called cohomological.}

For a given $2$-cocycle $\theta$ on $L$ we construct the associated central extension $L_{\theta}$. By definition it is a vector space $L_{\theta}=L\oplus V$ with equipped multiplication:
$$[x+u, y+v]=[x,y]_L +\theta(x,y), \eqno(1)$$
where $[\cdot , \cdot]_L$ is the bracket on $L$ and $x,y\in L,  \  \ u,v\in V.$

\section{Classification of $k$-dimensional central extensions of null-filiform Leibniz algebras}

In this section, we focus on $k$-dimensional central extensions of null-filiform Leibniz algebra. First, we compute a basis of $HL^2(NF_n,V).$

\begin{prop}\label{cocyclenul}
	Let $L$ be the null-filiform Leibniz algebra and let $V=<x_1,x_2,\dots,x_k>$ be an abelian algebra. Then
	\begin{itemize}
	
	\item A basis of $ZL^2(NF_n,V)$ is formed by the following cocycles $$\theta_{i,j}(e_i,e_1)=x_j, \ \  1\leq i\leq n, \ \
	1\leq j\leq k$$
	
	\item A basis of $BL^2(NF_n,V)$ is formed by the following coboundaries $$\theta_{i,j}(e_i,e_1)=x_j, \ \  1\leq i\leq n-1, \ \
	1\leq j\leq k$$
	
	\item A basis of $HL^2(NF_n, V)$ is formed by the following cocycles $$\theta_{j}(e_n,e_1)=x_j,  \ \ 1\leq
	j\leq k$$
		\end{itemize}
\end{prop}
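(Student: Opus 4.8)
The plan is to reduce everything to the very simple bracket of $NF_n$, where the only nonzero products are $[e_i,e_1]=e_{i+1}$ for $1\le i\le n-1$. Since $V$ is abelian with basis $x_1,\dots,x_k$, a $V$-valued bilinear map decomposes as $\theta=\sum_{j=1}^{k}\theta^{(j)}x_j$ with scalar components, and by linearity of the cocycle identity $\theta$ is a cocycle (resp.\ coboundary) if and only if each $\theta^{(j)}$ is. Thus it suffices to determine the scalar-valued cocycles and coboundaries and then tensor the answer with $V$; this is exactly what produces the extra index $j$ in all three lists.

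First I would write a general bilinear $\theta$ via its values $\theta(e_p,e_q)$ and impose $\theta(e_p,[e_q,e_r])=\theta([e_p,e_q],e_r)-\theta([e_p,e_r],e_q)$ on every triple of basis vectors. Because $[e_a,e_b]\neq 0$ only when $b=1$ and $a\le n-1$, the identity splits according to whether $q=1$ and/or $r=1$. The case $q=r=1$ gives $\theta(e_p,e_2)=0$ for all $p$, the two terms $\theta(e_{p+1},e_1)$ on the right cancelling. The case $q=1,\ r\ge 2$ gives $\theta(e_s,e_r)=0$ for all $s\ge 2,\ r\ge 2$. The case $q\ge 2,\ r=1$ yields the recursion $\theta(e_p,e_{q+1})=-\theta(e_{p+1},e_q)$, and setting $p=1$ together with the previous vanishing forces $\theta(e_1,e_r)=0$ for $r\ge 3$, while $\theta(e_1,e_2)=0$ already holds. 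The case $q,r\ge 2$ is vacuous. Collecting these, the only surviving components are $\theta(e_p,e_1)$ for $1\le p\le n$, and they satisfy no further relation, which gives the stated basis of $ZL^2(NF_n,V)$.

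Next I would compute the coboundaries directly. For a linear map $\varphi\colon NF_n\to V$ the associated coboundary is $\theta(e_p,e_q)=\varphi([e_p,e_q])$, nonzero only on the products $[e_i,e_1]=e_{i+1}$, so $\theta(e_i,e_1)=\varphi(e_{i+1})$ for $1\le i\le n-1$. As $e_2,\dots,e_n$ are linearly independent, $\varphi$ can carry them to arbitrary independent elements of $V$, so these coboundaries are precisely spanned by the $\theta_{i,j}$ with $1\le i\le n-1$; in particular $\theta_{n,j}$ is never a coboundary since $[e_n,e_1]=0$. Passing to $HL^2=ZL^2/BL^2$, the classes of $\theta_{1,j},\dots,\theta_{n-1,j}$ vanish and the classes of $\theta_{n,j}$ survive, giving the basis $\theta_j(e_n,e_1)=x_j$.

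The main obstacle will be the bookkeeping in the cocycle case analysis: one must check that every triple is covered and, in particular, handle correctly the boundary indices $p=n$ or $q=n$ where some products drop out, and combine the three nontrivial cases in the right order, feeding the vanishing obtained from the $q=1$ cases into the recursion coming from the $r=1$ case to conclude that $\theta(e_1,e_r)$ and $\theta(e_p,e_2)$ all vanish. Once this is organized, reading off $ZL^2$, $BL^2$, and the quotient is routine.
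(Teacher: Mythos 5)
Your proposal is correct and is exactly the argument the paper leaves implicit: its proof is the single line ``follows directly from the definition of a cocycle,'' and your case analysis on basis triples (using that $[e_a,e_b]\neq 0$ only for $b=1$, $a\leq n-1$), together with the observation that coboundaries are $\theta(e_i,e_1)=\varphi(e_{i+1})$ and hence vanish on $(e_n,e_1)$, is precisely that routine verification carried out in full. No gaps: the reduction to scalar components, the treatment of the boundary indices, and the passage to the quotient are all handled correctly.
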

\begin{proof}
The proof follows directly from the definition of a cocycle.
\end{proof}

\begin{thm}
A central extension of an $n$-dimensional complex null-filiform Leibniz algebra is isomorphic to one of the following non-isomorphic algebras:
	$$ NF_{n}\oplus \mathbb{C}^{k}, \quad NF_{n+1}\oplus \mathbb{C}^{k-1}.$$
\end{thm}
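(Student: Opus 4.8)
The plan is to use the central 2-cocycle machinery developed above, together with the explicit basis of $HL^2(NF_n,V)$ from Proposition \ref{cocyclenul}. By formula $(1)$, every central extension of $NF_n$ by the $k$-dimensional abelian algebra $V=\langle x_1,\dots,x_k\rangle$ is isomorphic to $(NF_n)_\theta$ for some cocycle $\theta\in ZL^2(NF_n,V)$, and two extensions give isomorphic algebras precisely when their cohomology classes are related by an automorphism of the pair. Since $BL^2$ yields only coboundaries (which produce split extensions), the genuine new structure comes from a representative $\theta=\sum_{j=1}^k \lambda_j\theta_j$, where the basis classes are $\theta_j(e_n,e_1)=x_j$. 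Thus the first step is to write down the general cocycle as a linear combination of these $k$ basis classes, recording a coefficient vector $(\lambda_1,\dots,\lambda_k)\in\mathbb{C}^k$.

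Next I would analyze the two genuinely different cases for this vector. If all $\lambda_j=0$, then $\theta$ is a coboundary and the extension splits as $NF_n\oplus\mathbb{C}^k$. If the vector is nonzero, I would use the freedom to change basis inside $V$, i.e. act by $GL(V)\cong GL_k(\mathbb{C})$ on the coefficient vector, to reduce $(\lambda_1,\dots,\lambda_k)$ to the standard vector $(1,0,\dots,0)$. After this normalization, the only nonzero new product is $[e_n,e_1]=x_1$, so renaming $x_1=:e_{n+1}$ extends the chain $[e_i,e_1]=e_{i+1}$ all the way to $i=n$. The remaining central elements $x_2,\dots,x_k$ contribute a direct summand $\mathbb{C}^{k-1}$, yielding $NF_{n+1}\oplus\mathbb{C}^{k-1}$. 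I would verify directly from the multiplication table in Theorem 1.4 that the resulting algebra is indeed $NF_{n+1}$: the element $e_1$ generates, $R_{e_1}$ is a single Jordan block of size $n+1$, and the nilpotency chain has the required dimensions $\dim L^i=(n+2)-i$.

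The main obstacle, and the step deserving the most care, is establishing non-isomorphism of the two resulting algebras $NF_n\oplus\mathbb{C}^k$ and $NF_{n+1}\oplus\mathbb{C}^{k-1}$, and confirming that these are the only two classes. For the former, the cleanest invariant is the index of nilpotency (equivalently the size of the largest Jordan block of a right-multiplication operator, via the characteristic sequence): $NF_{n+1}\oplus\mathbb{C}^{k-1}$ has nilindex $n+1$ while $NF_n\oplus\mathbb{C}^k$ has nilindex $n$, so they cannot be isomorphic. I would also note that any nonzero choice of coefficient vector leads, after the $GL_k$ reduction, to the same algebra, which is why there is a unique non-split class rather than a family. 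The one subtlety to check is that the action used to normalize the coefficient vector genuinely corresponds to an equivalence of central extensions (an automorphism $\varphi$ compatible with $\epsilon$ and $\lambda$ in the sense of the equivalence definition above), so that the reduction respects the classification rather than merely simplifying the cocycle formally; once this compatibility is recorded, the two cases exhaust all extensions and the theorem follows.
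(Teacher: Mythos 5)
Your proposal is correct and follows essentially the same route as the paper: reduce via Proposition \ref{cocyclenul} to the single nontrivial product $[e_n,e_1]=\sum_j \lambda_j x_j$, split off $NF_n\oplus\mathbb{C}^k$ when the coefficient vector vanishes, and otherwise rename that vector as $e_{n+1}$ (a change of basis in $V$) to obtain $NF_{n+1}\oplus\mathbb{C}^{k-1}$. The extra checks you include (nilindex distinguishing the two algebras, compatibility of the $GL_k(\mathbb{C})$ normalization) are details the paper leaves implicit, not a different method.
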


\begin{proof} Let  $\{e_1,e_3,\dots,e_n\}$ be a basis of $NF_n$ and $\{x_1,\dots,x_k\}$ be a basis of $V.$
From the definition of the product on central extension $(1)$ we conclude that $\{e_1,e_3,\dots,e_n,
x_1,x_2,\dots,x_k\}$ forms a basis of the algebra $L=NF_n\oplus V.$

Applying Proposition \ref{cocyclenul}, we get that the table of multiplication of the central extension of $NF_n$ has the following form:
$$\begin{array}{ll}
[e_i,e_1]=e_{i+1},  & 1\leq i\leq n-1,\\[1mm]
[e_n,e_1]=\sum\limits_{i=1}^k\alpha_i x_i,&
\end{array}$$

If $\alpha_i=0$ for all $1\leq i\leq k,$  then we obtain the algebra $NF_n\oplus \mathbb{C}^k.$

If there exists $\alpha_i\neq0$ for some $i$, then setting $e_{n+1}=\sum\limits_{i=1}^k\alpha_i x_i$ we get the algebra $NF_{n+1}\oplus \mathbb{C}^{k-1}.$
\end{proof}

\section{Classification of $k$-dimensional central extensions of naturally graded non-Lie filiform Leibniz algebras}

In this section we present the classification of $k$-dimensional central extensions of naturally graded non-Lie filiform Leibniz algebras. Let $V=<x_1,x_2,\dots,x_k>$ be an abelian algebra. First, we compute a basis of $HL^2(F_n^i,V)$ with $i=1,2$.

\begin{prop}\label{cocyclefil}
\begin{itemize}

\

\item The following cocycles
$$\theta_{i,j}(e_i,e_1)=x_j, \ \  1\leq i\leq n, \ \
\theta_{n+1,j}(e_1,e_2)=x_j, \ \ \theta_{n+2,j}(e_2,e_2)=x_j$$
form a basis of $ZL^2(F_n^1,V)$ and
$ZL^2(F_n^2,V);$
	
\item The following coboundaries
\begin{itemize}
\item[$a)$] $\theta_{1,j}(e_1,e_1)=x_j, \ \ \theta_{1,j}(e_2,e_1)=x_j,\ \
 \theta_{i,j}(e_{i+1},e_1)=x_j, \ \ 2\leq i\leq n-2, \ \
1\leq j\leq k,$ \\
form a basis of $BL^2(F_n^1,V)$,

\item[$b)$] $\theta_{1,j}(e_1,e_1)=x_j,\ \
\theta_{i,j}(e_{i+1},e_1)=x_j, \ \ 2\leq i\leq n-2, \ \ 1\leq
j\leq k,$ \\
form a basis of $BL^2(F_n^2,V)$;
\end{itemize}

 \item  Basis of the spaces $HL^2(F_n^1,V)$ and
 $HL^2(F_n^2,V)$ are formed by the following cocycles
$$\theta_{1,j}(e_2,e_1)=x_j, \ \ \theta_{2,j}(e_n,e_1)=x_j,\ \
\theta_{3,j}(e_{1},e_2)=x_j,  \ \ \theta_{4,j}(e_{2},e_2)=x_j,\ \
1\leq j\leq k.$$
\end{itemize}
\end{prop}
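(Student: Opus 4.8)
The plan is to work one coordinate of $V$ at a time. Since $V$ is abelian and central, any $V$-valued central $2$-cocycle decomposes as $\theta=\sum_{j=1}^k\theta^{(j)}x_j$ with each $\theta^{(j)}\colon L\times L\to\mathbb F$ a scalar central $2$-cocycle, and coboundaries decompose the same way; so it suffices to describe the scalar spaces and then tensor with the basis $\{x_1,\dots,x_k\}$ of $V$. A scalar cocycle is determined by the values $\theta(e_a,e_b)$, and I would extract every constraint from the cocycle identity on basis triples,
$$\theta(e_a,[e_b,e_c])=\theta([e_a,e_b],e_c)-\theta([e_a,e_c],e_b).$$
The decisive structural feature of both $F_n^1$ and $F_n^2$ is that every nonzero product is a right multiplication by $e_1$ landing in $L^2=\langle e_3,\dots,e_n\rangle$; hence $[e_a,e_b]=0$ whenever $b\neq1$, and neither $e_1$ nor $e_2$ lies in $L^2$.

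First I would establish the main vanishing relation. Taking $b=1$ and $c\geq2$ makes the left side vanish (as $[e_1,e_c]=0$) and kills the second right-hand term (as $[e_a,e_c]=0$), leaving $\theta([e_a,e_1],e_c)=0$; letting $a=1$ and $3\le a\le n-1$ and using $[e_1,e_1]=e_3$, $[e_a,e_1]=e_{a+1}$ gives
$$\theta(e_m,e_c)=0,\qquad m\geq3,\ c\geq2.$$
It then remains to kill the two low rows $\theta(e_1,e_j)$ and $\theta(e_2,e_j)$ for $j\ge3$. The case $j=3$ follows from the triple $a\in\{1,2\},\,b=c=1$ with $[e_1,e_1]=e_3$, where the two right-hand terms coincide and cancel. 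For $j\ge4$ I would use $c=1$ and the reduction $\theta(e_a,e_{b+1})=-\theta([e_a,e_1],e_b)$ (valid because $[e_a,e_b]=0$ for $b\ge2$), feeding in the vanishing just obtained. This shows every scalar cocycle is supported on the pairs $(e_i,e_1)$ $(1\le i\le n)$, $(e_1,e_2)$ and $(e_2,e_2)$. Conversely each corresponding point function is a cocycle: the left side always vanishes for lack of an $e_1$- or $e_2$-component in a bracket, and the right side vanishes the same way except for $(e_i,e_1)$ with $i\ge3$, where the only surviving contributions occur at $b=c=1$ and cancel. This yields the first bullet \emph{uniformly} for the two algebras — the sole discrepancy, whether $[e_2,e_1]$ equals $e_3$ or $0$, changes the intermediate steps for $\theta(e_2,e_j)$ but not the final support, and $(e_2,e_1)$ survives in either case.

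Next I would compute $BL^2$ by evaluating a linear map $\varphi\colon L\to V$ on brackets: $\theta(e_a,e_b)=\varphi([e_a,e_b])$ is nonzero only for $b=1$, and only $\varphi(e_3),\dots,\varphi(e_n)$ enter, since $e_1,e_2\notin L^2$. For $F_n^1$ the coincidence $[e_1,e_1]=[e_2,e_1]=e_3$ gives $\theta(e_1,e_1)=\theta(e_2,e_1)=\varphi(e_3)$ and $\theta(e_i,e_1)=\varphi(e_{i+1})$, producing the basis in $a)$: one generator linking $(e_1,e_1)$ and $(e_2,e_1)$, then singletons on $(e_3,e_1),\dots,(e_{n-1},e_1)$. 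For $F_n^2$ the vanishing $[e_2,e_1]=0$ drops the value at $(e_2,e_1)$, giving the basis in $b)$. In both cases $\dim BL^2=(n-2)k$, the assignment $\varphi\mapsto\theta$ being injective on $\langle e_3,\dots,e_n\rangle$ because the generators have disjoint supports.

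Finally, $HL^2=ZL^2/BL^2$ is the quotient of an $(n+2)k$-dimensional space by an $(n-2)k$-dimensional one. The coboundaries annihilate the classes of $(e_3,e_1),\dots,(e_{n-1},e_1)$ and impose $(e_1,e_1)\equiv-(e_2,e_1)$ for $F_n^1$ (respectively $(e_1,e_1)\equiv0$ for $F_n^2$); either way exactly four independent classes remain, represented by $(e_2,e_1)$, $(e_n,e_1)$, $(e_1,e_2)$ and $(e_2,e_2)$, and tensoring with $V$ gives the stated basis. I expect the only delicate point to be the cocycle step: the careful bookkeeping of the rows $a=1,2$ and of the boundary indices ($i=n$, and the split $[e_1,e_1]=e_3$ versus $[e_i,e_1]=e_{i+1}$), together with verifying that the $F_n^1$/$F_n^2$ discrepancy at $(e_2,e_1)$ washes out. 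The coboundary and quotient computations are then routine linear algebra.
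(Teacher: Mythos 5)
Your proof is correct, and it takes the same route the paper intends: the paper's own ``proof'' is just the remark that the result follows by straightforward calculation from the definitions, and your argument is precisely that calculation carried out in full (coordinate-wise reduction to scalar cocycles, extraction of the constraints $\theta(e_m,e_c)=0$ for $m\ge 3$, $c\ge 2$ from the cocycle identity, the explicit description of coboundaries via $\varphi(e_3),\dots,\varphi(e_n)$, and the dimension count $(n+2)k-(n-2)k=4k$ for the quotient). In particular you correctly handle the one genuinely delicate point, namely that for $F_n^1$ the coboundary coming from $\varphi(e_3)$ is supported at both $(e_1,e_1)$ and $(e_2,e_1)$, which is exactly why the class of $\theta_{1,j}(e_2,e_1)=x_j$ survives in $HL^2$ for both algebras.
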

\begin{proof} The proof is carried out by straightforward calculations.
\end{proof}

\noindent {\bf One-dimensional central extensions.} In this subsection we study the classification of one-dimensional central extensions of the algebras $F_n^1$ and $F_n^2.$

\begin{thm}\label{one-dimeFn1}
An arbitrary one-dimensional central extension of the complex algebra $F_n^1$ is isomorphic to one of the following pairwise non-isomorphic algebras:
 $$F^1_{n}\oplus \mathbb{C}; \ \
F_{n+1}^1(0,\dots,0,\alpha_4,\alpha_3), \ \alpha_3,
\alpha_4\in\{0,1\}; \ \ L_{n+1}^{3,\lambda}; \ \
L_{n+1}^{4,\lambda};\ L_{n+1}^{5,\lambda,\mu};\ L_{n+1}^1.$$
\end{thm}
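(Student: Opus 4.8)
The plan is to construct the generic one-dimensional central extension of $F_n^1$ by writing down its multiplication table using the cohomology basis from Proposition~\ref{cocyclefil}, and then to reduce the resulting parameters to the listed normal forms by a change of basis. Since $\dim V=1$, set $V=\langle x_1\rangle$ and write $x_1=e_{n+1}$. By Proposition~\ref{cocyclefil} a basis of $HL^2(F_n^1,V)$ consists of $\theta_{1,1}(e_2,e_1)$, $\theta_{2,1}(e_n,e_1)$, $\theta_{3,1}(e_1,e_2)$, $\theta_{4,1}(e_2,e_2)$, so an arbitrary cohomology class is $\theta=a\,\theta_{1,1}+b\,\theta_{2,1}+c\,\theta_{3,1}+d\,\theta_{4,1}$ for scalars $a,b,c,d\in\mathbb{C}$. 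Using formula $(1)$, the extension $L_\theta=F_n^1\oplus\langle e_{n+1}\rangle$ then has the products of $F_n^1$ together with the extra terms
$$[e_2,e_1]=e_3+a\,e_{n+1},\quad [e_n,e_1]=b\,e_{n+1},\quad [e_1,e_2]=c\,e_{n+1},\quad [e_2,e_2]=d\,e_{n+1}.$$

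The main work is then the normalization. First I would separate the degenerate case from the generic one. If $b=0$, the element $e_{n+1}$ is not picked up as a new top-degree generator, and I expect the extension to remain a (possibly twisted) filiform algebra of the same nilindex together with a split central $\mathbb{C}$; this should produce the summand $F_n^1\oplus\mathbb{C}$ as well as, after absorbing $a,c,d$ by basis changes, the nilpotent filiform algebras $F_{n+1}^1(0,\dots,0,\alpha_4,\alpha_3)$. If $b\neq 0$, the product $[e_n,e_1]$ becomes nonzero, so the nilindex increases by one and the algebra becomes quasi-filiform on $n+1$ generators; this is the source of the quasi-filiform algebras $L_{n+1}^{3,\lambda}$, $L_{n+1}^{4,\lambda}$, $L_{n+1}^{5,\lambda,\mu}$ and $L_{n+1}^1$ from the classification theorem of naturally graded quasi-filiform algebras of type I. Concretely, after rescaling so that $b=1$, the invariants controlling which algebra appears are the triple $(a,c,d)$ together with the interaction coming from the Leibniz identity, and I would match these against the $\lambda,\mu$ parameters in the list.

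The technical heart is carrying out the base changes that reduce $(a,b,c,d)$ to the finitely many normal forms while tracking when two choices yield isomorphic algebras. The natural moves are general-position substitutions $e_1'=e_1+\sum \mu_i e_i$, $e_2'=\nu e_2+\cdots$, and scalings of $e_{n+1}$; each such substitution transforms $(a,b,c,d)$ by an explicit affine-linear action, and I would compute this action and then pick representatives of its orbits. I expect the step most likely to cause trouble to be verifying that the produced list is \emph{pairwise non-isomorphic} and that nothing is missed: distinguishing, for instance, $L_{n+1}^{3,\lambda}$ for different $\lambda$, or separating the split case $F_n^1\oplus\mathbb{C}$ from a non-split extension with $b=0$, requires isomorphism invariants (such as the characteristic sequence $C(L)$, the behavior of $[L_1,L_1]$ versus $[L_{n-1},L_1]$, and whether $e_{n+1}\in L^2$). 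The cleanest organizing principle is to first show that every resulting algebra is a naturally graded quasi-filiform non-Lie algebra of type I (or degenerates to a filiform one plus a split center), and then appeal directly to the classification theorem already stated in the Preliminaries to read off the normal forms, so that the remaining labor is only to determine which entries of that list can actually be realized as central extensions of $F_n^1$.
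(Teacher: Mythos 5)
Your overall strategy (write the generic extension from the cohomology basis of Proposition~\ref{cocyclefil}, then normalize the four parameters by base changes and match against known classifications) is exactly the paper's, but you have inverted the key dichotomy, and this inversion would make the proof collapse. If $b\neq 0$ (the paper's $\alpha_2\neq 0$), then $e_{n+1}\in L^2$ and the chain $e_1\to e_3\to e_4\to\cdots\to e_n\to e_{n+1}$ shows $\dim L^i=n+1-i$ for $2\leq i\leq n+1$: the nilindex becomes $n+1$, equal to the dimension, so the algebra is \emph{filiform}, not quasi-filiform. Your sentence ``the nilindex increases by one and the algebra becomes quasi-filiform'' is self-contradictory: nilindex equal to dimension is precisely the definition of filiform. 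This case is the source of the non-split filiform family $F_{n+1}^1(0,\dots,0,\alpha_4,\alpha_3)$ (after killing $a$ via $e_2'=e_2-a\,e_n$ and normalizing $(c,d)$ with the base change of \cite{gomez-omirov}); it contributes nothing to the quasi-filiform list.

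Conversely, if $b=0$ but $(a,c,d)\neq(0,0,0)$, the extension is \emph{not} ``filiform of the same nilindex plus a split central $\mathbb{C}$'': it is a genuinely non-split algebra of dimension $n+1$ and nilindex $n$, hence quasi-filiform ($\dim L^2=n-1$, $\dim L^3=n-3$). After the relabeling $e_1'=e_1$, $e_i'=e_{i+1}$ ($2\leq i\leq n-1$), $e_n'=e_2$, $e_{n+1}'=x$, it is here --- and only here --- that the algebras $L_{n+1}^{3,\lambda}$, $L_{n+1}^{4,\lambda}$, $L_{n+1}^{5,\lambda,\mu}$, $L_{n+1}^1$ arise (split onto $F_n^1\oplus\mathbb{C}$ only when all four parameters vanish). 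Since your organizing principle (``every resulting algebra is quasi-filiform of type I, or degenerates to a filiform one plus a split center'') is built on the swapped cases, it is false as stated: the non-split filiform extensions $F_{n+1}^1(0,\dots,0,\alpha_4,\alpha_3)$ are neither quasi-filiform nor split, so the appeal to the type-I classification theorem would never recover them, and the attempted matching of the $b\neq0$ case against the $\lambda,\mu$ parameters of the quasi-filiform list would fail on nilindex grounds. The fix is simply to exchange the two cases and then proceed as you outline; with that correction your plan coincides with the paper's proof, including the final observation that the two families cannot mix because one consists of filiform and the other of quasi-filiform algebras.
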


\begin{proof}
From Proposition \ref{cocyclefil} and the construction of central extensions, we derive that the table of multiplication of one-dimensional extension of $F_n^1$ has the form:
$$\begin{array}{llll}
[e_1,e_1]=e_{3}, & [e_i,e_1]=e_{i+1},& 3\leq i\leq n-1,&\\[1mm]
[e_2,e_1]=e_{3}+\alpha_1x, & [e_n,e_1]=\alpha_2x, &
[e_1,e_2]=\alpha_3x, & [e_2,e_2]=\alpha_4x.
\end{array}$$

We distinguish the following cases:

\

 \noindent $\bullet$ If $\alpha_i=0$ for any $1\leq i\leq4,$  then we get algebra $F_{n}^1\oplus \mathbb{C}.$

\

 \noindent $\bullet$  If there exists $i$ such that $\alpha_i\neq0,$ then we consider the cases.

\

\noindent \textbf{Case 1.} Let $\alpha_2\neq0,$ then one can assume $\alpha_2=1.$ Note that $dimL^i=n+1-i$ for $2\leq i\leq n+1$. Consequently we have $(n+1)$-dimensional filiform Leibniz algebra. Making the following change of basis $e_2^\prime=e_2-\alpha_1e_n$ and $e_{n+1}^\prime=x,$ we derive $\alpha_1=0.$ Hence, in this case we obtain the family of algebras $F_{n+1}^1(0,0,\dots,0,\alpha_4,\alpha_3)$.

In the work \cite{gomez-omirov}, the authors show that instead the general change of basis in the family $F_{n+1}^1(\alpha_4,\alpha_5, \dots, \alpha_{n+1}, \theta )$ it is sufficient to consider the following change of basis:
$$\begin{array}{lll}
e_1^\prime=Ae_1+Be_2,& e_2^\prime=(A+B)e_2+B(\alpha_3-\alpha_4)e_n,&
e_3^\prime=A(A+B)e_3+(AB\alpha_3+B^2\alpha_4)e_{n+1},\\
e_i^\prime=A^{i-2}(A+B)e_i, & 4\leq i\leq n+1,& A+B\neq 0.
\end{array}$$

From $[e_1^\prime,e_2^\prime]=\alpha_3^\prime e_{n+1}^\prime$
and $[e_2^\prime,e_2^\prime]=\alpha_4^\prime e_{n+1}^\prime$,
we obtain
$$\alpha_3^\prime=\frac{A\alpha_3+B\alpha_4}{A^{n-1}},
\ \ \alpha_4^\prime=\frac{(A+B)\alpha_4}{A^{n-1}}.$$

Consider subcases.

\begin{itemize}
\item[$a)$] $\alpha_4=0$. Then $\alpha^\prime_4=0$
and $\alpha_3^\prime=\frac{\alpha_3}{A^{n-2}}.$

 If $\alpha_3=0,$ then $\alpha_3^\prime=0.$ Otherwise, putting $A=\sqrt[n-2]{\alpha_3}$ we have that $\alpha_3^\prime=1.$
Therefore, we have $F_{n+1}^1(0,\dots,0,0,\alpha_3),$ where $\alpha_3\in\{0,1\}.$

\item[$b)$] $\alpha_4\neq0$. Putting $B=\frac{A^{n-1}-A\alpha_4}{\alpha_4},$ we obtain $\alpha^\prime_4=1$ and $\alpha_3^\prime=\frac{\alpha_3-\alpha_4+A^{n-2}}{A^{n-2}}.$

If $\alpha_3=\alpha_4,$ then $\alpha_3^\prime=1,$ otherwise, putting
$A=\sqrt[n-2]{\alpha_4-\alpha_3},$ we get $\alpha_3^\prime=0$. Therefore, we obtain the algebras
$F_{n+1}^1(0,\dots,0,1,\alpha_3),$ where $\alpha_3\in\{0,1\}.$

\

Thus, we have $F_{n+1}^1(0,\dots,0,\alpha_4,\alpha_3),$ where $\alpha_3, \alpha_4\in\{0,1\}.$
\end{itemize}

\

\noindent \textbf{Case 2.} Let $\alpha_2=0.$ Taking the change of basis
 $$e_1^\prime=e_1,\ e_i^\prime=e_{i+1},\ 2\leq i\leq n-1, \ e_n^\prime=e_2, \ e^\prime_{n+1}=x,$$ we get the algebra
$$\begin{array}{lll}
[e_i,e_1]=e_{i+1},& 1\leq i\leq n-2,&\\[1mm]
[e_n,e_1]=e_{2}+\alpha_1e_{n+1},& [e_1,e_n]=\alpha_3e_{n+1},&
[e_n,e_n]=\alpha_4e_{n+1}.
\end{array}$$
Note that this algebra is $(n+1)$-dimensional quasi-filiform Leibniz algebra.

\begin{itemize}
\item\textbf{Case 2.1.} $\alpha_1\neq0$.

If $\alpha_4=0,$ then we get the algebra which is isomorphic to $L_{n+1}^{3,\lambda}$.

If $\alpha_3=0, \ \alpha_4\neq0,$ then obtained algebra is isomorphic to $L_{n+1}^{4,\lambda}$.

If $\alpha_3\neq0, \ \alpha_4\neq0$, then derived algebra is isomorphic to $L_{n+1}^{5,\lambda,\mu}$.

\item \textbf{Case 2.2.} $\alpha_1=0.$ Take the change of basis
$$e_1^\prime=e_1+e_n, \ e_2^\prime=2e_2+(\alpha_3+\alpha_4)e_{n+1}, \
e_i^\prime=2e_i, \ 3\leq i\leq n, \ e_{n+1}^\prime=e_{n+1}.$$ Then we obtain the algebra
$$\begin{array}{lll}
[e_i,e_1]=e_{i+1},& 1\leq i\leq n-2,&\\[1mm]
[e_n,e_1]=e_{2}+(\alpha_4-\alpha_3)e_{n+1},&
[e_1,e_n]=2(\alpha_3+\alpha_4)e_{n+1},&
[e_n,e_n]=4\alpha_4e_{n+1}.
\end{array}$$
It is easy to see that if $\alpha_3\neq\alpha_4,$ then we are in the case of $\alpha_1\neq0.$ Therefore, we may assume that $\alpha_3=\alpha_4\neq0.$ Making the change of basis $$e_i^\prime=e_i, \ 1\leq i\leq n-1, \
e_n^\prime=e_1-e_n, \ e_{n+1}^\prime=-4\alpha_3e_{n+1},$$ we deduce the algebra $L_{n+1}^{1}.$
\end{itemize}

Since obtained algebras in the case 1 are filiform and the algebras of the case 2 are quasi-filiform, they are non isomorphic.
\end{proof}

The next theorem describes one-dimensional central extensions of the algebra $F_n^2$.
\begin{thm}
An arbitrary one-dimensional central extension of the algebra $F_n^2$ is isomorphic to one of the following pairwise non-isomorphic algebras: $$F^2_{n}\oplus \mathbb{C}; \ \ F_{n+1}^2; \
F_{n+1}^2(0,\dots,0,0,1); \ F_{n+1}^2(0,\dots,0,1,0); \
\ L_{n+1}^{1,\lambda};\ L_{n+1}^{2,\lambda};\
L_{n+1}^2.$$
\end{thm}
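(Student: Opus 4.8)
The plan is to follow the scheme of Theorem~\ref{one-dimeFn1} almost step for step, because by Proposition~\ref{cocyclefil} the space $HL^2(F_n^2,V)$ has exactly the same four-element basis $\theta_1(e_2,e_1)$, $\theta_2(e_n,e_1)$, $\theta_3(e_1,e_2)$, $\theta_4(e_2,e_2)$ as in the $F_n^1$ case. For a one-dimensional extension, $V=\langle x\rangle$, this means the table of $F_n^2$ is augmented by $[e_2,e_1]=\alpha_1 x$, $[e_n,e_1]=\alpha_2 x$, $[e_1,e_2]=\alpha_3 x$, $[e_2,e_2]=\alpha_4 x$. The single structural difference from the $F_n^1$ situation is decisive: here $[e_2,e_1]$ carries no $e_3$-summand, so the second generator $e_2$ is decoupled from the head of the chain. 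This is exactly why the quasi-filiform outputs will fall into the families $L^{1,\lambda}_{n+1},L^{2,\lambda}_{n+1},L^2_{n+1}$ (the ones without the extra $e_2$ term) rather than into $L^{3},L^4,L^5$.

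If all $\alpha_i$ vanish the extension is $F_n^2\oplus\mathbb{C}$; otherwise I split on $\alpha_2$. When $\alpha_2\neq0$ (normalize $\alpha_2=1$) the relation $[e_n,e_1]=x$ prolongs the chain to $e_1\to e_3\to\dots\to e_n\to x$, so $\dim L^i=(n+1)-i$ and the extension is an $(n+1)$-dimensional filiform algebra; moreover its associated graded algebra still has $[\bar e_2,\bar e_1]=0$ (the term $\alpha_1 x$ sits in the bottom of the filtration), so it belongs to the $F^2$-class and not the $F^1$-class. The substitution $e_2'=e_2-\alpha_1 e_n$ removes $\alpha_1$, and since no cocycle feeds the brackets $[e_i,e_2]$ for $3\le i\le n-1$, all intermediate structure constants are zero, placing the algebra in $F_{n+1}^2(0,\dots,0,\alpha_3,\alpha_4)$ with $\beta_{n+1}=\alpha_3$ and $\gamma=\alpha_4$. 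Using the change of basis for the $F^2$-family from \cite{gomez-omirov}, whose relevant effect here comes from $e_1'=e_1+\nu e_2$ sending $(\beta_{n+1},\gamma)\mapsto(\alpha_3+\nu\alpha_4,\alpha_4)$ together with a rescaling of $e_2$, I obtain: $F_{n+1}^2$ when $\alpha_3=\alpha_4=0$; $F_{n+1}^2(0,\dots,0,0,1)$ when $\alpha_4\neq0$ (take $\nu=-\alpha_3/\alpha_4$, then normalize $\gamma$); and $F_{n+1}^2(0,\dots,0,1,0)$ when $\alpha_4=0,\ \alpha_3\neq0$.

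When $\alpha_2=0$ the relabelling $e_1'=e_1$, $e_i'=e_{i+1}$ $(2\le i\le n-1)$, $e_n'=e_2$, $e_{n+1}'=x$ turns the table into $[e_i,e_1]=e_{i+1}$ $(1\le i\le n-2)$, $[e_n,e_1]=\alpha_1 x$, $[e_1,e_n]=\alpha_3 x$, $[e_n,e_n]=\alpha_4 x$: an $(n+1)$-dimensional quasi-filiform algebra whose type is governed by the triple $(\alpha_1,\alpha_3,\alpha_4)$ modulo the scalings $e_1\mapsto se_1,\ e_n\mapsto\tau e_n,\ e_{n+1}\mapsto\sigma e_{n+1}$, acting by $\alpha_1'=\tfrac{s\tau}{\sigma}\alpha_1,\ \alpha_3'=\tfrac{s\tau}{\sigma}\alpha_3,\ \alpha_4'=\tfrac{\tau^2}{\sigma}\alpha_4$, and the shift $e_1\mapsto e_1+\rho e_n$, acting by $(\alpha_1,\alpha_3,\alpha_4)\mapsto(\alpha_1+\rho\alpha_4,\alpha_3+\rho\alpha_4,\alpha_4)$. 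If $\alpha_4=0$ and $\alpha_1\neq0$, scaling normalizes $\alpha_1=1$ and leaves the genuine parameter $\lambda=\alpha_3/\alpha_1\in\mathbb{C}$, giving $L^{1,\lambda}_{n+1}$; if $\alpha_4=\alpha_1=0$ then $\alpha_3\neq0$ and a rescaling yields $L^2_{n+1}$; if $\alpha_4\neq0$ the shift can always force $\alpha_1\neq0$, after which I normalize $\alpha_1=\alpha_4=1$ and land in $L^{2,\lambda}_{n+1}$.

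The step I expect to be the main obstacle is the final normalization in the branch $\alpha_4\neq0$: scalings alone fix only the ratio $\alpha_3/\alpha_1$, so the continuous parameter must be collapsed to the discrete $\lambda\in\{0,1\}$ by combining them with the shift. Writing $\lambda=\dfrac{\alpha_3+\rho\alpha_4}{\alpha_1+\rho\alpha_4}=1+\dfrac{\alpha_3-\alpha_1}{\alpha_1+\rho\alpha_4}$ shows that $\lambda=1$ exactly when $\alpha_3=\alpha_1$, and that when $\alpha_3\neq\alpha_1$ any value other than $1$ (in particular $\lambda=0$) is reachable; here one must also carry along the induced central correction to $e_2'=[e_1',e_1']$ so that the chain relations survive. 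The remaining bookkeeping is routine: the filiform outputs have nilindex $n+1$ while the quasi-filiform ones (including the decomposable $F_n^2\oplus\mathbb{C}$) have nilindex $n$, so the two blocks cannot meet; pairwise non-isomorphism inside each block follows from the cited classification of the $F^2$-family and of the type-$I$ quasi-filiform algebras, with $F_n^2\oplus\mathbb{C}$ singled out by its decomposability.
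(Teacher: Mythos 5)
Your proposal is correct and follows essentially the same route as the paper: the same cocycle basis from Proposition \ref{cocyclefil}, the same split on $\alpha_2$ (filiform extensions identified inside the family $F_{n+1}^2(0,\dots,0,\alpha_3,\alpha_4)$ via the change of basis of \cite{gomez-omirov}, versus quasi-filiform extensions matched against the type-I classification), and the same key substitution $e_1\mapsto e_1+\rho e_n$ that the paper uses in its Case 2.2 with $\rho=1$. Your only deviations are organizational (splitting the quasi-filiform case by $\alpha_4$ rather than $\alpha_1$) and that you make explicit the normalization collapsing the ratio $\alpha_3/\alpha_1$ to $\lambda\in\{0,1\}$ for $L_{n+1}^{2,\lambda}$, a step the paper asserts without computation.
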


\begin{proof}
	From Proposition \ref{cocyclefil} and from the construction of the central extensions, we derive that the table of multiplication of one-dimensional extension of $F_n^2$ has the form:
$$\begin{array}{llll}
[e_1,e_1]=e_{3},& [e_i,e_1]=e_{i+1},& 3\leq i\leq n-1,&\\[1mm]
[e_2,e_1]=\alpha_1x,&[e_n,e_1]=\alpha_2x,& [e_1,e_2]=\alpha_3x, &
[e_2,e_2]=\alpha_4x.
\end{array}$$

If $\alpha_i=0$ for any $1\leq i\leq4,$  then we get split algebra $F_{n}^2\oplus \mathbb{C}.$

Let $\alpha_i\neq 0$ for some $i$. Then we consider the possible cases.

\noindent\textbf{Case 1.} Let $\alpha_2\neq0.$ Scaling of basis elements we can assume
$\alpha_2=1.$ Since $dimL^i=n+1-i$ for $2\leq i\leq n+1$, we have $(n+1)$-dimensional filiform Leibniz algebra. Making the change $e_2^\prime=e_2-\alpha_1e_n$ and $e_{n+1}^\prime=x,$ we obtain
$\alpha_1=0.$ So, in this case we get the family of algebras $F_{n+1}^2(0,0,\dots,0,\alpha_3,\alpha_4).$

Arguing similarly as in the proof of Theorem \ref{one-dimeFn1} we obtain
the following pairwise non-isomorphic algebras:
$$F_{n+1}^2, \ \ F_{n+1}^2(0,\dots,0,1,0), \ \
F_{n+1}^2(0,\dots,0,0,1).$$

%
%
%

\

\noindent\textbf{Case 2.} Let $\alpha_2=0.$ Making the following change of basis:
$$e_1^\prime=e_1,\ e_i^\prime=e_{i+1},\ 2\leq i\leq n-1, \
e_n^\prime=e_2, \ e^\prime_{n+1}=x,$$ we get the family of algebras
$$\begin{array}{lll}
[e_i,e_1]=e_{i+1},& 1\leq i\leq n-2,&\\[1mm]
[e_n,e_1]=\alpha_1e_{n+1},& [e_1,e_n]=\alpha_3e_{n+1},&
[e_n,e_n]=\alpha_4e_{n+1}.
\end{array}$$

Note that these algebras are $(n+1)$-dimensional quasi-filiform Leibniz algebras of type I.

\begin{itemize}
\item \textbf{Case 2.1.} Let $\alpha_1\neq0.$ Then by rescaling basis elements we can assume $\alpha_1=1.$

If $\alpha_4=0,$ then obtained algebra is isomorphic to the algebra $L_{n+1}^{1,\lambda}$.

If $\alpha_4\neq0,$ then derived algebra is isomorphic to the algebra $L_{n+1}^{2,\lambda}$.

\item \textbf{Case 2.2.} Let $\alpha_1=0.$ Let us take the change of basis
$$e_1^\prime=e_1+e_n, \ e_2^\prime=e_2+(\alpha_3+\alpha_4)e_{n+1}, \
e_i^\prime=e_i, \ 3\leq i\leq n+1.$$ Then we obtain the family of algebras:
$$\begin{array}{lll}
[e_i,e_1]=e_{i+1},& 1\leq i\leq n-2,&\\[1mm]
[e_n,e_1]=\alpha_4e_{n+1},& [e_1,e_n]=(\alpha_3+\alpha_4)e_{n+1},&
[e_n,e_n]=\alpha_4e_{n+1}.
\end{array}$$

It is easy to see that if $\alpha_4\neq0,$ then we are in conditions of the case $\alpha_1\neq0.$ Therefore, we assume that $\alpha_4=0$ and $\alpha_3\neq0.$ Then making the change of basis $e_{n+1}^\prime=\alpha_3e_{n+1},$ we get $L_{n+1}^2$.
\end{itemize}
\end{proof}

\noindent {\bf Two-dimensional central extensions.}
This subsection is devoted to the study of classification of two-dimensional non-split central extensions of the algebras $F_n^1$ and $F_n^2.$

First, we consider two-dimensional central extensions of the algebra $F_n^1.$ From Proposition \ref{cocyclefil} and from the definition of central extensions, we conclude that the table of multiplication of two-dimensional central extension of the algebra $F_n^1$ has the form:
$$\begin{array}{ll}
[e_1,e_1]=e_{3},& [e_2,e_1]=e_{3}+\alpha_1x_1+\beta_1x_2,\\[1mm]
[e_i,e_1]=e_{i+1},& 3\leq i\leq n-1,\\[1mm]
[e_n,e_1]=\alpha_2x_1+\beta_2x_2,&
[e_1,e_2]=\alpha_3x_1+\beta_3x_2, \
[e_2,e_2]=\alpha_4x_1+\beta_4x_2.
\end{array}$$

Let us take the following change of basis
$$e_1^\prime=e_1,\ e_2^\prime=e_3,\
e_i^\prime=e_{i+1},\ 2\leq i\leq n-1, \ e_n^\prime=e_2, \
e_{n+1}=x_1, \ e_{n+2}=x_2.$$
Then we obtain:
$$\left\{\begin{array}{ll}
[e_i,e_1]=e_{i+1},& 1\leq i\leq n-2,\\[1mm]
[e_n,e_1]=e_{2}+\alpha_1e_{n+1}+\beta_1e_{n+2},&\\[1mm]
[e_{n-1},e_1]=\alpha_2e_{n+1}+\beta_2e_{n+2},&\\[1mm]
[e_1,e_n]=\alpha_3e_{n+1}+\beta_3e_{n+2},&\\[1mm]
[e_n,e_n]=\alpha_4e_{n+1}+\beta_4e_{n+2}.&
\end{array}\right.$$

\begin{thm}\label{2-dimensionalFn1}
An arbitrary two-dimensional non-split central extension of the algebra $F_n^1$ is isomorphic to one of the following pairwise non-isomorphic algebras:
	\begin{itemize}
	\item $	L(1,0,0,0),$ $ L(1,1,0,0),$ $ L(0,1,0,0),$ $ L(0,1,1,0),$ $
	L(1,0,0,\beta_4),\ \beta_4\neq 0,$ $ L(0,1,-1,0),$ $L(0,1,1,1),$ $L(0,1,2,4),$
	
	\noindent where
	$$L(\alpha_3,\alpha_4,\beta_3,\beta_4)\left\{\begin{array}{ll}
	[e_i,e_1]=e_{i+1},& 1\leq i\leq n-1,\\[1mm]
	[e_{n+1},e_1]=e_{2}+e_{n+2},&\\[1mm]
	[e_1,e_{n+1}]=\alpha_3e_{n}+\beta_3e_{n+2}, &\\[1mm]
	[e_{n+1},e_{n+1}]=\alpha_4e_{n}+\beta_4e_{n+2},
	\end{array}\right.$$
	\item $\begin{array}{lllllll}
	M(1,\beta_4),\ \beta_4\neq 1, & M(0,0), & M(0,1), & M(0,2), & M(2,1), & M(2,2), & M(1,1),
	\end{array}$
	
	\noindent where
	$$M(\alpha_4,\beta_4):\left\{\begin{array}{ll}
	[e_i,e_1]=e_{i+1},& 1\leq i\leq n-2,\\[1mm]
	[e_n,e_1]=e_{2}+e_{n+1},&\\[1mm]
	[e_1,e_n]=e_{n+2}, &\\[1mm]
	[e_n,e_n]=\alpha_4e_{n+1}+\beta_4e_{n+2},&
	\end{array}\right.$$
	
\item $L_{n+2}^1,$ $L_{n+2}^{3,\lambda},$ $L_{n+2}^{4,\lambda},\ L_{n+2}^{5,\lambda,\mu}$ and the algebra
$$ L^{*}:\left\{\begin{array}{ll}
[e_i,e_1]=e_{i+1},& 1\leq i\leq n-1,\\[1mm]
[e_1,e_{n+1}]=e_{2}+e_{n+2},& \\[1mm]
[e_i,e_{n+1}]=e_{i+1},& 2\leq i\leq n-1,\\[1mm]
[e_{n+1},e_{n+1}]=e_{n}.&
\end{array}\right.$$

\end{itemize}
\end{thm}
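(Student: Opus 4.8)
The plan is to regard the eight-parameter table displayed above as the generic $2$-dimensional central extension of $F_n^1$ and to reduce it to normal forms using two groups of admissible changes of basis: the lifts of automorphisms of $F_n^1$ (for which I would reuse the explicit substitution recorded in the proof of Theorem~\ref{one-dimeFn1}), and the $GL_2(\mathbb{C})$ acting on the central plane $V=\langle e_{n+1},e_{n+2}\rangle$, supplemented by coboundary shifts $e_i\mapsto e_i+(\text{central})$. It is convenient to package the parameters as four central vectors $(\alpha_1,\beta_1),(\alpha_2,\beta_2),(\alpha_3,\beta_3),(\alpha_4,\beta_4)\in\mathbb{C}^2$ attached to the slots $[e_n,e_1],[e_{n-1},e_1],[e_1,e_n],[e_n,e_n]$. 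The first step is to discard the split extensions: invoking Proposition~\ref{cocyclefil}, I would use the criterion that the extension is non-split exactly when the two coordinate cocycles (the $e_{n+1}$- and $e_{n+2}$-components) are linearly independent in $HL^2(F_n^1,\mathbb{C})$, so that the central plane $V$ is genuinely needed; indecomposability of each resulting algebra is then checked directly at the end of its branch.

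The decisive case distinction is whether $[e_{n-1},e_1]=\alpha_2 e_{n+1}+\beta_2 e_{n+2}$ vanishes, since this governs whether the $R_{e_1}$-chain $e_1\to e_2\to\dots\to e_{n-1}$ can be prolonged. When $(\alpha_2,\beta_2)\neq 0$ I would absorb this nonzero central vector into the chain by renaming it as a new $e_n$, lengthening the principal chain to $e_1\to\dots\to e_n$ and leaving a single genuine central generator; the former second generator becomes $e_{n+1}$. This is precisely the shape $L(\alpha_3,\alpha_4,\beta_3,\beta_4)$, and what remains is to normalize the four surviving parameters under the residual stabilizer (the automorphisms preserving this shape, together with the remaining scalings of $e_n$ and the central generator). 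The arithmetic here runs parallel to subcases $a)$ and $b)$ in the proof of Theorem~\ref{one-dimeFn1} and should produce the eight listed normal forms $L(1,0,0,0),\dots,L(0,1,2,4)$.

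When $(\alpha_2,\beta_2)=0$ the chain stops at $e_{n-1}$ and the extension is quasi-filiform, with all central content carried by the three vectors $u_1=(\alpha_1,\beta_1)$, $u_3=(\alpha_3,\beta_3)$, $u_4=(\alpha_4,\beta_4)$ coming from $[e_n,e_1],[e_1,e_n],[e_n,e_n]$; non-splitness forces $\langle u_1,u_3,u_4\rangle=\mathbb{C}^2$. I would subdivide according to which pair among these is linearly independent and use the $GL_2(\mathbb{C})$-action on $V$ to send that pair to the standard basis. The branch in which $u_1,u_3$ are independent is driven to the family $M(\alpha_4,\beta_4)$, with the residual freedom normalizing $(\alpha_4,\beta_4)$ to the seven tabulated values; the remaining alignments reproduce the quasi-filiform type-I algebras $L_{n+2}^1, L_{n+2}^{3,\lambda}, L_{n+2}^{4,\lambda}, L_{n+2}^{5,\lambda,\mu}$ already classified earlier, while the single degenerate configuration in which $e_{n+1}$ acquires a second adjoint-chain action $[e_i,e_{n+1}]=e_{i+1}$ yields the exceptional algebra $L^{*}$.

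Completeness should then be transparent from the exhaustive case split, so the hard part will be pairwise non-isomorphism across the full list. The three blocks are separated by coarse invariants—the characteristic sequence $C(L)$ together with the dimensions $\dim L^i$ of the lower central terms distinguish the $L(\cdots)$ family from the $M(\cdots)$ family and from the type-I algebras $L_{n+2}^{\cdots}$—but separating members \emph{within} a family, for instance the parameters $(\alpha_4,\beta_4)$ in $M(\alpha_4,\beta_4)$ or the four-parameter points of $L(\cdots)$, will require finer $\mathrm{Aut}$-invariant functions of the structure constants and an explicit determination of the isomorphism-induced relations among the normalized parameters. Verifying these rigidities, and excluding the apparently sporadic coincidences between $M$-type and $L_{n+2}^{\cdots}$-type algebras, is where I expect the genuine effort to concentrate.
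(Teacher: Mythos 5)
Your overall skeleton (split on whether the central component of $[e_{n-1},e_1]$ vanishes, absorb it into the chain when it does not, then normalize the survivors) is the same as the paper's, but your roadmap has a genuine gap that would make the final list come out wrong. When $(\alpha_2,\beta_2)\neq(0,0)$, absorbing that vector into the chain does \emph{not} leave you with ``precisely the shape $L(\alpha_3,\alpha_4,\beta_3,\beta_4)$'': a fifth parameter survives, namely the residual central component of $[e_n,e_1]$, which after the relabelling reads $[e_{n+1},e_1]=e_2+\beta_1e_{n+2}$. The shape $L$ corresponds to $\beta_1\neq0$ (normalized to $1$). The subcase $\beta_1=0$ can never be brought to that shape --- the resulting algebras have characteristic sequence $(n,1,1)$ instead of $(n,2)$ --- and it is exactly this subcase that produces $L_{n+2}^{1}$ and $L^{*}$ (in the paper, via the change $e_{n+1}^\prime=e_1-e_{n+1}$). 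Moreover, the honest normalization of the genuine $L$-shape yields not only the eight listed $L(\cdots)$ but also, as degenerations, the type-I algebras $L_{n+2}^{3,\lambda}$, $L_{n+2}^{4,\lambda}$, $L_{n+2}^{5,\lambda,\mu}$ (for instance $\alpha_3=\alpha_4=0$ gives $L_{n+2}^{3,\lambda}$), which you do not account for in this branch.

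This matters because you have assigned all of $L_{n+2}^{1}$, $L_{n+2}^{3,\lambda}$, $L_{n+2}^{4,\lambda}$, $L_{n+2}^{5,\lambda,\mu}$ and $L^{*}$ to the other branch $(\alpha_2,\beta_2)=(0,0)$, where they provably cannot occur: in that branch, for every element $x=\sum A_ie_i$ the operator $R_x$ maps $e_1$ and $e_n$ into $\langle e_2\rangle$ plus the center and maps $e_i$ to $A_1e_{i+1}$ for $2\leq i\leq n-2$, so $R_x^{n-1}=0$ and no algebra arising there has a Jordan block of size $n$; but all five of those algebras (and the $L$-family) have characteristic sequence beginning with $n$. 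In the paper that branch yields only the $M$-family: the subcase $\beta_3=0$, $\beta_4\neq0$ is pushed back into the $M$-shape by a determinant (linear-independence) argument, and $(\alpha_1,\beta_1)=(0,0)$ forces a split algebra. So, executed as written, your plan would never produce $L_{n+2}^{1}$ and $L^{*}$, and would search for the type-I algebras in a branch where they cannot exist. The repair is to reinstate the parameter $\beta_1$ in the first branch and split it into $\beta_1\neq0$ (giving the $L(\cdots)$ normal forms together with $L_{n+2}^{3,\lambda}$, $L_{n+2}^{4,\lambda}$, $L_{n+2}^{5,\lambda,\mu}$) and $\beta_1=0$ (giving $L_{n+2}^{1}$ and $L^{*}$); with that correction, the non-isomorphism discussion you outline (characteristic sequences across families, invariant functions within families) is the right closing step and matches the paper's use of nullity invariants.
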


\begin{proof}

We distinguish the following cases:

\

\noindent \textbf{Case 1.}  Let $(\alpha_2,\beta_2)\neq(0,0)$. Without loss of generality, one can assume $\alpha_2\neq0$ and $[e_{n-1},e_1]=e_{n+1}.$ Making the change $e_n^\prime=e_n-\alpha_1e_{n-1}$, we obtain $\alpha_1=0.$ Setting
$e_n^\prime=e_{n+1},\ e_{n+1}^\prime=e_{n},$ we obtain the table of multiplication:
$$\begin{array}{ll}
[e_i,e_1]=e_{i+1},& 1\leq i\leq n-1,\\[1mm]
[e_{n+1},e_1]=e_{2}+\beta_1e_{n+2},&\\[1mm]
[e_1,e_{n+1}]=\alpha_3e_{n}+\beta_3e_{n+2}, &\\[1mm]
[e_{n+1},e_{n+1}]=\alpha_4e_{n}+\beta_4e_{n+2}.&
\end{array}$$

\textbf{Case 1.1.} Let $\beta_i=0$ for $1\leq i\leq 4.$ Then we  get a split central extension.

\

\textbf{Case 1.2.} Let $\beta_1\neq0.$ Putting $e_{n+2}'=\beta_1e_{n+2}$, we can assume that $\beta_1=1.$

Since $dimL^2=n$ and $dim L^i=n+1-i$ for $3\leq i \leq n$, we have $(n+2)$-dimensional quasi-filiform Leibniz algebra with characteristic sequence equal to $C(L)=(n,2).$

Let us take the general change of basis:
$$e_1^\prime=\sum\limits_{i=1}^{n+2}A_ie_i, \quad e_{n+1}^\prime=\sum\limits_{i=1}^{n+2}B_ie_i.$$

Then we express new basis elements $\{e_1^\prime, e_2^\prime,\dots,
e_{n+2}^\prime\}$ via basis elements $\{e_1, e_2,\dots, e_{n+2}\}.$ Then for $e_2^\prime$ we have
$$e_2^\prime=[e_1^\prime,e_1^\prime]=A_1(A_1+A_{n+1})e_2+A_1
\sum\limits_{i=2}^{n-2}A_{i}e_{i+1}+$$$$+
(A_{1}A_{n-1}+A_{1}A_{n+1}\alpha_3+A_{n+1}^2\alpha_4)e_n+
(A_{1}A_{n+1}+A_{1}A_{n+1}\beta_3+A_{n+1}^2\beta_4)e_{n+2}.$$
Further, we obtain the expressions for $e_t^\prime$ with $3\leq t\leq n,$ namely:
$$e_t^\prime=A_1^{t-1}(A_1+A_{n+1})e_t+A_1^{t-1}
\sum\limits_{i=2}^{n-t+1}A_{i}e_{i+t-1}, \ \ 3\leq t\leq n-1, \ \
\ e_n^\prime=A_1^{n-1}(A_1+A_{n+1})e_n.$$

Therefore, $A_1(A_1+A_{n+1})\neq0.$

From equations
$[e_{n-1}^\prime,e_{n+1}^\prime]=A_1^{n-2}(A_1+A_{n+1})B_1e_{n}=0$ we derive $B_1=0.$

Since $e_{n+2}^\prime=[e_{n+1}^\prime,e_1^\prime]-e_{2}^\prime$,
we deduce
$$e_{n+2}^\prime=(A_1B_{n+1}-A_1(A_1+A_{n+1}))e_2+
A_1\sum\limits_{i=2}^{n-2}(B_i-A_i)e_{i+1}+[(A_1B_{n-1}+A_{n+1}
B_{n+1}\alpha_4)-$$
$$(A_1A_{n-1}+A_1A_{n+1}\alpha_3+A_{n+1}^2\alpha_4)]e_{n}+[(A_1B_{n+1}+A_{n+1}
B_{n+1}\beta_4)-(A_1A_{n+1}+A_1A_{n+1}\beta_3+A_{n+1}^2\beta_4)]e_{n+2}.$$

Taking into account that in new basis $[e_{n+2}^\prime,e_{1}^\prime]=0,$
we derive restrictions:
$$B_{n+1}=A_1+A_{n+1}, \ \  B_i=A_i, \ \ 2\leq i\leq n-2.$$

Considering the products
$$[e_1^\prime,e_{n+1}^\prime]=\alpha_3^\prime
e_{n}^\prime+\beta_3^\prime e_{n+2}^\prime, \ \
[e_{n+1}^\prime,e_{n+1}^\prime]=\alpha_4^\prime e_{n}^\prime
+\beta_4^\prime e_{n+2}^\prime,$$ we derive expressions for the new parameters:
$$\begin{array}{lr}
\alpha_3^\prime=\displaystyle\frac{A_1^2\alpha_3+A_1A_{n+1}\alpha_4+
A_{n+1}B_{n+1}(\alpha_3\beta_4-\alpha_4\beta_3)+
A_1(A_{n-1}-B_{n-1})\beta_3+
A_{n+1}(A_{n-1}-B_{n-1})\beta_4}{A_1^{n-1}(A_1+A_{n+1}\beta_4-A_{n+1}\beta_3)},&(1)\\[5mm]
\alpha_4^\prime=\displaystyle\frac{B_{n+1}(A_1\alpha_4+
A_{n+1}(\alpha_3\beta_4-\alpha_4\beta_3)+(A_{n-1}-B_{n-1})\beta_4)}{A_1^{n-1}(A_1+A_{n+1}\beta_4-A_{n+1}\beta_3)},&(2)\\[5mm]
\beta_3^\prime=\displaystyle\frac{(A_1\beta_3+A_{n+1}\beta_4)B_{n+1}}{A_1(A_1+A_{n+1}\beta_4-A_{n+1}\beta_3)},&(3)\\[5mm]
\beta_4^\prime=\displaystyle\frac{\beta_4B_{n+1}^2}{A_1(A_1+A_{n+1}\beta_4-A_{n+1}\beta_3)},&(4)
\end{array}$$
with condition
$$A_1(A_1+A_{n+1})(A_1+A_{n+1}\beta_4-A_{n+1}\beta_3)\neq0.$$

Consider now the possible cases.

\

\textbf{Case 1.2.1.} Let $\beta_4=0$. Then $\beta'_4=0$ and we obtain that $\beta_3$ is nullity invariant, that is, $\beta_3'=0$ if and only if $\beta_3=0$. Thus, we can distinguish two cases.

\begin{itemize}
\item[$a)$] Let $\beta_3=0$.
\begin{itemize}
\item[$a.1)$] $\alpha_4=0$. Then
$\alpha_3^\prime=\frac{\alpha_3}{A_1^{n-2}}, \ \
\alpha_4^\prime=0.$

If $\alpha_3=0,$ then $\alpha_3^\prime=0.$ Otherwise,
putting $A_1=\sqrt[n-2]{\alpha_3}$. Hence, we obtain
$\alpha_3^\prime=1$. In this subcase we get the algebras $L_{n+2}^{3,0}$ and $L(1, 0, 0,0).$

\item[$a.2)$] $\alpha_4\neq0.$ Then
$\alpha_3^\prime=\frac{A_1\alpha_3+A_{n+1}\alpha_4}{A_1^{n-1}}, \
\ \alpha_4^\prime=\frac{(A_1+A_{n+1})\alpha_4}{A_1^{n-1}}.$ Note that the following expression $\alpha_3^\prime-\alpha_4^\prime=
\frac{\alpha_3-\alpha_4}{A_1^{n-2}}$ give us one more nullity invariant.

If $\alpha_3-\alpha_4=0,$ then putting $A_{n+1}=
\frac{A_1^{n-1}-A_1\alpha_4}{\alpha_4}$, we have $\alpha_3^\prime=\alpha_4^\prime=1$ and the algebra $L(1,1,0,0).$

If $\alpha_3-\alpha_4\neq0,$ then choosing $A_{n+1}=-\frac{A_1\alpha_3}{\alpha_4}$ and
$A_1=\sqrt[n-2]{\alpha_4-\alpha_3}$, we obtain $\alpha_3^\prime=0,
 \ \alpha_4^\prime=1$. Therefore, we get the algebra $L(0,1,0,0).$
\end{itemize}

\item[$b)$] Let $\beta_3\neq0$. Then putting
$$A_{n-1}-B_{n-1}=-\frac{A_1^2\alpha_3+A_1A_{n+1}\alpha_4-
A_{n+1}B_{n+1}\alpha_4\beta_3}{A_1\beta_3}$$ we obtain
$$\alpha_3^\prime=0, \ \
\alpha_4^\prime=\frac{(A_1+A_{n+1})\alpha_4}{A_1^{n-1}}, \ \
\beta_3^\prime=\frac{(A_1+A_{n+1})\beta_3}{A_1-A_{n+1}\beta_3}.$$
From the last expression we deduce nullity invariant:
$$\beta_3^\prime+1=\frac{A_1(\beta_3+1)}{A_1-A_{n+1}\beta_3}.$$

We distinguish the following subcases:

\begin{itemize}
\item[$b.1)$] $\beta_3\neq-1.$ Putting
$A_{n+1}=\frac{A_1(1-\beta_3)}{2\beta_3}$, we obtain
$\beta_3^\prime=1, \ \
\alpha_4^\prime=\frac{(\beta_3+1)\alpha_4}{2A_1^{n-2}\beta_3}$.

If $\alpha_4=0,$ then $\alpha_4^\prime=0$ and we have the algebra $L_{n+2}^{3,1}.$

If $\alpha_4\neq0,$ then choosing $A_1=\sqrt[n-2]{\frac{(\beta_3+1)\alpha_4}{2\beta_3}}$, we obtain
$\alpha_4^\prime=1$ and the algebra $L(0,1,1,0).$

\item[$b.2)$] $\beta_3=-1.$ It implies $\beta_3^\prime=-1, \ \
\alpha_4^\prime=\frac{(A_1+A_{n+1})\alpha_4}{A_1^{n-1}}$.

If $\alpha_4=0,$ then $\alpha_4^\prime=0$ and the algebra
$L_{n+2}^{3,-1}$ is obtained.

If $\alpha_4\neq0,$ then setting $A_{n+1}=\frac{A_1^{n-1}-A_1\alpha_4}{\alpha_4}$, we get $\alpha_4^\prime=1$ and the algebra $L(0,1,-1,0).$
\end{itemize}
\end{itemize}

\

\textbf{Case 1.2.2.} Let $\beta_4\neq0$. Then similar as in naturally graded case (see the proof of Theorem 9 in \cite{JSC}) the values for $(\beta_3,\beta_4)$ are only following:  $(0,\beta_4),$ where $\beta_4\neq0,$ $(1,1)$ and $(2,4).$

\

\begin{itemize}
\item[$a)$] Let $(\beta_3,\beta_4)=(0,\beta_4)$ with $\beta_4\neq 0.$ Then from expressions $(1)-(4)$
we get
$$\alpha_3^\prime=\frac{\alpha_3}{A_1^{n-1}}, \ \
\alpha_4^\prime=\frac{A_1\alpha_4+(A_{n-1}-B_{n-1})\beta_4}{A_1^{n-1}},
\  \ \ A_{n+1}=0.$$

Taking $A_{n-1}-B_{n-1}=-\frac{A_1\alpha_4}{\beta_4}$, we obtain $\alpha_4^\prime=0$.

If $\alpha_3=0,$ then we have $\alpha_3^\prime=0$ and we obtain the algebra
$L_{n+2}^{4,\lambda}$ with $\lambda\neq0.$

If $\alpha_3\neq0,$ choosing $A_1=\sqrt[n-1]{\alpha_3}$,
we have $\alpha_3^\prime=1$ and the algebra $L(1,0,0,\beta_4),$ $\beta_4\neq0.$

\item[$b)$] Let $(\beta_3,\beta_4)=(1,1).$ Then expressions $(1)-(4)$ imply
$$\alpha_3^\prime=\frac{A_1\alpha_3+A_{n-1}-B_{n-1}}{A_1^{n-1}}, \ \
\alpha_4^\prime=\frac{A_1\alpha_4+(A_{n-1}-B_{n-1})}{A_1^{n-1}}, \
\ \ A_{n+1}=0.$$

Setting $A_{n-1}-B_{n-1}=-A_1\alpha_3$, we deduce $\alpha_3^\prime=0$ and $\alpha_4^\prime=\frac{\alpha_4-\alpha_3}{A_1^{n-2}}.$

Note that the following expression
$$\alpha_3^\prime-\alpha_4^\prime=\frac{\alpha_3-\alpha_4}{A_1^{n-2}}$$
is nullity invariant.

If $\alpha_3-\alpha_4=0,$ then we obtain $\alpha_4^\prime=0$ and the algebra $L_{n+2}^{5,1,1}.$

If $\alpha_3-\alpha_4\neq0,$ then choosing
$A_1=\sqrt[n-2]{\alpha_4-\alpha_3}$, we have $\alpha_4^\prime=1$ and the algebra $L(0,1,1,1).$

\item[$c)$] Let $(\beta_3,\beta_4)=(2,4).$ Then expressions $(1)-(4)$ transform to the following form:
$$\alpha_3^\prime=\frac{A_1\alpha_3+2(A_{n-1}-B_{n-1})}{A_1^{n-1}}, \ \
\alpha_4^\prime=\frac{A_1\alpha_4+4(A_{n-1}-B_{n-1})}{A_1^{n-1}},
\ \ \ A_{n+1}=0.$$

Putting $A_{n-1}-B_{n-1}=-\frac{A_1\alpha_3}{2}$, we derive
$\alpha_3^\prime=0$ and $\alpha_4^\prime=\frac{\alpha_4-2\alpha_3}{A_1^{n-2}}.$

It is easy to check that the following expression:
$$2\alpha_3^\prime-\alpha_4^\prime=\frac{2\alpha_3-\alpha_4}{A_1^{n-2}}$$
is nullity invariant.

If $\alpha_3-2\alpha_4=0,$ then we have $\alpha_4^\prime=0$ and the algebra $L_{n+2}^{5,2,4}.$

If $\alpha_3-2\alpha_4\neq0,$ then choosing $A_1=\sqrt[n-2]{\alpha_4-2\alpha_3}$, we obtain $\alpha_4^\prime=1$
and the algebra $L(0,1,2,4).$
\end{itemize}

\textbf{Case 1.3.} Let $\beta_1=0.$
Let us make the following change of basis:
$$e_1^\prime=e_1+(\alpha_4-\alpha_3)e_{n-1}+e_{n+1}, \
e_2^\prime=2e_2+2\alpha_4e_n+(\beta_3+\beta_4)e_{n+2}, \
e_i^\prime=2e_i, \ 3\leq i\leq n+1, \ e_{n+2}^\prime=e_{n+2}.$$
Then the table of multiplication of the algebra we have the form:
$$\begin{array}{lll}
[e_i,e_1]=e_{i+1},& 1\leq i\leq n-1,&\\[1mm]
[e_{n+1},e_1]=e_{2}+(\beta_4-\beta_3)e_{n+2},&
[e_1,e_{n+1}]=(\alpha_3+\alpha_4)e_{n}+2(\beta_3+\beta_4)e_{n+2},&\\[1mm]
[e_{n+1},e_{n+1}]=2\alpha_4e_{n}+4\beta_4e_{n+2}.&
\end{array}$$

 We have $\beta_3=\beta_4\neq0.$ Indeed, if $\beta_3\neq\beta_4,$ then we are in the case of  $\beta_1\neq0$.

Putting $e_{n+2}^\prime=(\alpha_3+\alpha_4)e_n+4\beta_3e_{n+2}$ we obtain an $(n+2)$-dimensional quasi-filiform Leibniz algebra with $C(L)=(n,1,1)$:
$$\begin{array}{lll}
[e_i,e_1]=e_{i+1},& 1\leq i\leq n-1,&\\[1mm]
[e_{n+1},e_1]=e_{2},& \\[1mm]
[e_1,e_{n+1}]=e_{n+2},&\\[1mm]
[e_{n+1},e_{n+1}]=\alpha_4e_{n}+e_{n+2}.&
\end{array}$$

Taking the general change of generator basis elements and applying similar arguments as in {\bf Case 1.2} we derive the expression for the new parameter:
$$\alpha_4^\prime=\frac{\alpha_4}{A_1^{n-2}}.$$
Since $\alpha_4$ is nullity invariant, we can easily get that $\alpha_4$ is equal either 0 or 1. Therefore, we have algebras:
$$\left\{\begin{array}{ll}
[e_i,e_1]=e_{i+1},& 1\leq i\leq n-1, \\[1mm]
[e_{n+1},e_1]=e_{2},& \\[1mm]
[e_{n+1},e_{n+1}]=e_{n+2},&\\[1mm]
[e_1,e_{n+1}]=e_{n+2},&
\end{array}\right.
\ \ \ \left\{\begin{array}{ll}
[e_i,e_1]=e_{i+1},& 1\leq i\leq n-1, \\[1mm]
[e_{n+1},e_1]=e_{2},& \\[1mm]
[e_{n+1},e_{n+1}]=e_{n}+e_{n+2},&\\[1mm]
[e_1,e_{n+1}]=e_{n+2}.&
\end{array}\right.$$

The following change of basis:
$$e_i^\prime=e_i, \ 1\leq i\leq n, \ e_{n+1}^\prime=e_1-e_{n+1}, \  e_{n+2}^\prime=-e_{n+2}.$$
leads to the algebras $L_{n+2}^1$  and $L^*.$

\

\noindent\textbf{Case 2.} Let $(\alpha_2,\beta_2)=(0,0)$ and
$(\alpha_1,\beta_1)\neq(0,0)$. Without loss of generality, we might assume that $\alpha_1\neq0.$ Therefore, taking
$e_{n+1}'=e_2+\alpha_1e_{n+1}+\beta_1e_{n+2}$, we get $[e_{n},e_1]=e_2+e_{n+1}.$

Thus, we have the table of multiplications:
$$\begin{array}{ll}
[e_i,e_1]=e_{i+1},& 1\leq i\leq n-2,\\[1mm]
[e_n,e_1]=e_{2}+e_{n+1},&\\[1mm]
[e_1,e_n]=\alpha_3e_{n+1}+\beta_3e_{n+2},&\\[1mm]
[e_n,e_n]=\alpha_4e_{n+1}+\beta_4e_{n+2}.&
\end{array}$$

Note that $dimL^2=n, \ dimL^i=n+1-i, \ \ 3\leq i \leq n.$

\

\textbf{Case 2.1.} Let $\beta_3\neq0$. Putting $e_{n+2}'=\alpha_3e_{n+1}+\beta_3e_{n+2},$ we can assume $\alpha_3=0, \ \beta_3=1$. Then we have the family of algebras:
$$M(\alpha_4,\beta_4):\left\{\begin{array}{ll}
[e_i,e_1]=e_{i+1},& 1\leq i\leq n-2,\\[1mm]
[e_n,e_1]=e_{2}+e_{n+1},& \\[1mm]
[e_1,e_n]=e_{n+2}, &\\[1mm]
[e_n,e_n]=\alpha_4e_{n+1}+\beta_4e_{n+2}.&
\end{array}\right.$$

Similar as in {\bf Case 1.2} we get the expressions for the new parameters
$$\alpha_4^\prime=\frac{B_{n}^2\alpha_4}{A_1^2+A_1A_{n}\beta_4+A_1A_{n}\alpha_4+A_{n}^2\alpha_4},$$
$$\beta_4^\prime=\frac{B_{n}(A_1\beta_4+A_{n}\alpha_4)}{A_1^2+A_1A_{n}\beta_4+A_1A_{n}\alpha_4+A_{n}^2\alpha_4},$$
with restriction $$A_1(A_1+A_{n})(A_1^2+A_1A_{n}\beta_4+A_1A_{n}\alpha_4+A_{n}^2\alpha_4)\neq0.$$

Consider
\begin{itemize}
\item  Let $\alpha_4=0$. Then $\alpha_4^\prime=0$ and $\beta_4^\prime=\frac{(A_1+A_{n})\beta_4}{A_1+A_{n}\beta_4}.$

If $\beta_4=0$, then we obtain  the algebra $M(0,0).$

If $\beta_4\neq0,$ then we have $\beta_4^\prime=\frac{(A_1+A_{n})\beta_4}{A_1+A_{n}\beta_4}.$ Note that the expression $\beta_4^\prime-1=\frac{A_1(\beta_4-1)}{A_1+A_{n}\beta_4}$ imply nullity invariant. Therefore the assumption $\beta_4=1$ deduce the algebra $M(0,1)$ and in opposite assumption (that is, $\beta_4\neq1$), choosing $A_{n}=\frac{A_1(\beta_4-2)}{\beta_4},$ we obtain $\beta_4^\prime=2$ and the algebra $M(0,2).$

\item Let $\alpha_4\neq0$. The following expressions:
$$\alpha_4^\prime-\beta_4^\prime=\frac{A_1(A_1+A_n)(\alpha_4-\beta_4)}{A_1^2+A_1A_{n}\beta_4+A_1A_{n}\alpha_4+A_{n}^2\alpha_4},$$ $$ \beta_4^\prime-1=\frac{A_1^2(\beta_4-1)}{A_1^2+A_1A_{n}\beta_4+A_1A_{n}\alpha_4+A_{n}^2\alpha_4}$$
 give us nullity invariants.

\begin{itemize}
	\item[$a)$] $\alpha_4-\beta_4\neq0$ and
$\beta_4\neq1.$ Putting
$A_{n}=\frac{A_1(1-\alpha_4)}{\alpha_4-\beta_4}$, we obtain
 the algebra $M(1,\beta_4),$ where $\beta_4\neq1.$

\item[$b)$] $\alpha_4-\beta_4\neq0$ and
$\beta_4=1.$ Setting $A_{n}=\frac{(\alpha_4-2)A_1}{\alpha_4}$, we have
$\alpha_4^\prime=2$ and the algebra $M(2,1).$

\item[$c)$] $\alpha_4-\beta_4=0$ and
$\beta_4\neq1.$ Choosing $A_{n}=\frac{A_1(-\alpha_4+\sqrt{2\alpha_4^2-2\alpha_4})}{\alpha_4}$,
we get $\alpha_4^\prime=2$ and the algebra $M(2,2).$

\item[$d)$] $\alpha_4-\beta_4=0$ and $\beta_4=1.$
Then we obtain $\alpha_4^\prime=\beta_4^\prime=1$ and the algebra $M(1,1).$
\end{itemize}

\end{itemize}

\

\textbf{Case 2.2.} Let $\beta_3=0$ and $\beta_4\neq0$. Let us take the change of basis as follows:
$$e_1^\prime=A_1e_1+A_ne_n, \ \
e_{n}^\prime=(A_1+A_n)e_n, \ \ \mbox{with}  \ A_1(A_1+A_{n})\neq0.$$

Similar as in {\bf Case 1.2} we get
$e_{n+1}^\prime=[e_{n}^\prime,e_1^\prime]-e_{2}^\prime=(A_1^2+A_1A_{n}\alpha_4-A_1A_{n}\alpha_3)e_{n+1}+A_1A_{n}\beta_4e_{n+2}.$
On the other hand, we have
$[e_{1}^\prime,e_n^\prime]=(A_1+A_n)(A_{1}\alpha_3+A_{n}\alpha_4)e_{n+1}+(A_1+A_n)A_{n}\beta_4e_{n+2}.$

Consider the determinant:
$$\left|%
\begin{array}{cc}
  A_1^2+A_1A_{n}\alpha_4-A_1A_{n}\alpha_3 & A_1A_{n}\beta_4 \\[1mm]
  (A_1+A_n)(A_{1}\alpha_3+A_{n}\alpha_4)&(A_1+A_n)A_{n}\beta_4 \\
\end{array}%
\right|=A_1A_n(A_1+A_{n})\beta_4(A_1-A_{n}\alpha_3-A_1\alpha_3)\neq0.$$

Therefore, we conclude that $e'_{n+1}$ and $[e'_1,e'_n]$ are lineally independent. Therefore, we are in the conditions of {\bf Case 2.1}.

\

\textbf{Case 2.3.} Let $(\alpha_1, \beta_1)=(0,0).$ Taking the following change of basis:
$$e_1^\prime=e_1+e_{n}, \
e_2^\prime=2e_2+(\alpha_3+\alpha_4)e_{n+1}+(\beta_3+\beta_4)e_{n+2}, \
e_i^\prime=2e_i, \ 3\leq i\leq n, \ e_{n+1}^\prime=e_{n+1}, \
e_{n+2}^\prime=e_{n+2},$$ we obtain the family of algebras:
$$\begin{array}{lll}
[e_i,e_1]=e_{i+1},& 1\leq i\leq n-2,&\\[1mm]
[e_{n},e_1]=e_{2}+(\alpha_4-\alpha_3)e_{n+1}+(\beta_4-\beta_3)e_{n+2},&\\[1mm]
[e_1,e_{n}]=2(\alpha_3+\alpha_4)e_{n+1}+2(\beta_3+\beta_4)e_{n+2},&\\[1mm]
[e_{n},e_{n}]=4\alpha_4e_{n+1}+4\beta_4e_{n+2}.&
\end{array}$$

Since $(\alpha_1,\beta_1)=(0,0),$ we can assume that  $(\alpha_3, \beta_3)=(\alpha_4, \beta_4)$. Making the change $e_{n+1}^\prime=4\alpha_3e_{n+1}+4\beta_3e_{n+2},$ we obtain a split algebra.
\end{proof}

\

Now, we study the two-dimensional non-split central extensions of the algebra $F_n^2.$ From Proposition \ref{cocyclefil} and definition of central extension, we conclude that the table of multiplication of two-dimensional central extension of the algebra $F_n^2$ has the following form:
$$\begin{array}{ll}
[e_1,e_1]=e_{3},& [e_2,e_1]=\alpha_1x_1+\beta_1x_2,\\[1mm]
[e_i,e_1]=e_{i+1},& 3\leq i\leq n-1,\\[1mm]
[e_n,e_1]=\alpha_2x_1+\beta_2x_2,&
[e_1,e_2]=\alpha_3x_1+\beta_3x_2, \
[e_2,e_2]=\alpha_4x_1+\beta_4x_2.
\end{array}$$

It we take the change of basis
$$e_1^\prime=e_1,\ e_2^\prime=e_3,\
e_i^\prime=e_{i+1},\ 2\leq i\leq n-1, \ e_n^\prime=e_2, \
e_{n+1}^\prime=x_1, \ e_{n+2}^\prime=x_2,$$
then its multiplication have the following form:
$$\left\{\begin{array}{ll}
[e_i,e_1]=e_{i+1},& 1\leq i\leq n-2,\\[1mm]
[e_n,e_1]=\alpha_1e_{n+1}+\beta_1e_{n+2},&\\[1mm]
[e_{n-1},e_1]=\alpha_2e_{n+1}+\beta_2e_{n+2},&\\[1mm]
[e_1,e_n]=\alpha_3e_{n+1}+\beta_3e_{n+2},&\\[1mm]
[e_n,e_n]=\alpha_4e_{n+1}+\beta_4e_{n+2}.&
\end{array}\right.$$

\begin{thm} An arbitrary two-dimensional non-split central extension of the algebra $F_n^2$ is isomorphic to one of the following pairwise non-isomorphic algebras:
	
	\begin{itemize}
	\item $N(1,0,0,0),$ $N(0,1,\beta_3,0),\ \alpha_4\in\{0,1\},\ \beta_3\neq 0,$ $N(1,0,0,1),$ $N(0,1,0,0),$ $N(0,1,1,1),$
	
	\noindent where
	$$N(\alpha_3,\alpha_4,\beta_3,\beta_4):\left\{\begin{array}{ll}
	[e_i,e_1]=e_{i+1},& 1\leq i\leq n-1,\\[1mm]
	[e_{n+1},e_1]=e_{n+2},&\\[1mm]
	[e_1,e_{n+1}]=\alpha_3e_{n}+\beta_3e_{n+2}, &\\[1mm]
	[e_{n+1},e_{n+1}]=\alpha_4e_{n}+\beta_4e_{n+2},&
	\end{array}\right.$$
	\item $R(0,0,1,0),\ R(0,0,1,1),\ R(0,1,1,\beta_4),\ \beta_4\in \mathbb{C}, R(1,0,0,1) $
	
	\noindent where
	$$R(\alpha_3,\alpha_4,\beta_3,\beta_4):\left\{\begin{array}{lll}
	[e_i,e_1]=e_{i+1},& 1\leq i\leq n-2,&\\[1mm]
	[e_n,e_1]=e_{n+1},&\\[1mm]
	[e_1,e_n]=\alpha_3e_{n+1}+\beta_3e_{n+2}, &\\[1mm]
	[e_n,e_n]=\alpha_4e_{n+1}+\beta_4e_{n+2},&
	\end{array}\right.$$
	
	\item $L_{n+2}^{1,\lambda},$ $L_{n+2}^{2,\lambda},$ $L_{n+2}^2$ and the algebra
$$N^*:	\left\{\begin{array}{ll}
		[e_i,e_1]=e_{i+1},& 1\leq i\leq n-1, \\[1mm]
		[e_1,e_{n+1}]=e_{n+2},& \\[1mm]
		[e_{n+1},e_{n+1}]=e_{n}.&
		\end{array}\right.$$
\end{itemize}
\end{thm}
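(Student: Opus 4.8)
The plan is to follow the scheme of Theorem~\ref{2-dimensionalFn1}, transplanting each step to the multiplication table of the two-dimensional central extension of $F_n^2$ displayed above. The organizing principle is how right multiplication by $e_1$ prolongs the chain $e_1\to e_2\to\cdots\to e_{n-1}$ into the centre: this is governed by the pairs $(\alpha_2,\beta_2)$ and $(\alpha_1,\beta_1)$ and, together with the nilindex, it fixes the characteristic sequence of the resulting $(n+2)$-dimensional algebra, which will be the coarse invariant separating the three groups of the statement. Accordingly I would split into Case~1, $(\alpha_2,\beta_2)\neq(0,0)$; Case~2, $(\alpha_2,\beta_2)=(0,0)$ but $(\alpha_1,\beta_1)\neq(0,0)$; and the degenerate Case~3, $(\alpha_1,\beta_1)=(\alpha_2,\beta_2)=(0,0)$.

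In Case~1 one rescales so that $\alpha_2\neq0$, sets $e_{n+1}'=\alpha_2 e_{n+1}+\beta_2 e_{n+2}$ to arrange $[e_{n-1},e_1]=e_{n+1}$, clears the $e_{n+1}$-component of $[e_n,e_1]$ by $e_n'=e_n-(\alpha_1/\alpha_2)e_{n-1}$, and swaps $e_n\leftrightarrow e_{n+1}$ so that the prolonged chain reads $e_1,\dots,e_n$ while the former $e_n$ becomes the adjoined generator. When the residual product $[e_{n+1},e_1]$ is nonzero it is normalized to $e_{n+2}$, producing the family $N(\alpha_3,\alpha_4,\beta_3,\beta_4)$ of characteristic sequence $(n,2)$. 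Its reduction proceeds exactly as in Case~1.2 of Theorem~\ref{2-dimensionalFn1}: writing two generic generators $e_1'=\sum A_ie_i$ and $e_{n+1}'=\sum B_ie_i$, propagating the products to express $e_2',\dots,e_n'$ and $e_{n+2}'$, reading off the admissibility constraints and deriving rational transformation laws for $(\alpha_3,\alpha_4,\beta_3,\beta_4)$. Splitting on the vanishing of these four parameters and exploiting the resulting nullity invariants (expressions vanishing in one basis if and only if they vanish in every basis) isolates the representatives $N(1,0,0,0)$, $N(0,1,\beta_3,0)$, $N(1,0,0,1)$, $N(0,1,0,0)$ and $N(0,1,1,1)$; the loci with $\alpha_3=\alpha_4=0$ are naturally graded and are recognized as the algebras $L_{n+2}^{1,\lambda}$ and $L_{n+2}^{2,\lambda}$ of the type~I classification recalled above. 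If instead $[e_{n+1},e_1]=0$, the symmetrizing change $e_1'=e_1+e_{n+1}$ and a residual normalization yield, according to whether $[e_{n+1},e_{n+1}]$ vanishes, the algebra $L_{n+2}^2$ or $N^*$, both of characteristic sequence $(n,1,1)$.

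In Case~2 one rescales to $[e_n,e_1]=e_{n+1}$, which keeps the chain at length $n-1$ and produces the family $R(\alpha_3,\alpha_4,\beta_3,\beta_4)$ of characteristic sequence $(n-1,2,1)$; the same generic change-of-basis machinery, now with fewer surviving coefficients, reduces the four parameters to the representatives $R(0,0,1,0)$, $R(0,0,1,1)$, $R(0,1,1,\beta_4)$ with $\beta_4\in\mathbb{C}$, and $R(1,0,0,1)$. The degenerate Case~3 is absorbed by the substitution $e_1'=e_1+e_n$, exactly as in Case~2.3 of Theorem~\ref{2-dimensionalFn1}, and after a residual normalization gives only split extensions. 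Pairwise non-isomorphism between the three groups then follows because they occupy the distinct characteristic-sequence strata $(n,2)$, $(n-1,2,1)$ and $(n,1,1)$, while non-isomorphism inside each group follows from the nullity invariants used in the normalization, supplemented by the action of the adjoined generator (so that, for example, $L_{n+2}^2$ and $N^*$ are told apart by whether $[e_{n+1},e_{n+1}]=0$).

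The main obstacle will be the bookkeeping of the generic change of basis inside the $N$- and $R$-families. As with the formulas $(1)$--$(4)$ obtained for $F_n^1$, one must solve for the admissible coefficients $A_i,B_i$ under the constraints forcing the target table and then extract the four transformation laws simultaneously; the delicate point is to pin down the correct nullity invariants so that each orbit is met exactly once. In particular one has to verify that the continuous moduli surviving in $N(0,1,\beta_3,0)$ and $R(0,1,1,\beta_4)$ genuinely parametrize non-isomorphic algebras, that the discrete alternatives (such as $\beta_4=1$ versus $\beta_4\neq1$) really separate orbits rather than relabel a single algebra, and that the naturally graded parameter loci are correctly identified with the already-classified algebras $L_{n+2}^{1,\lambda}$, $L_{n+2}^{2,\lambda}$, $L_{n+2}^2$ rather than being counted as new.
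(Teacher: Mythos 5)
Your proposal is correct and takes essentially the same approach as the paper: the paper's entire proof of this theorem is the single remark that it is carried out like the proof of Theorem~\ref{2-dimensionalFn1}, and your outline is exactly that adaptation (case split on $(\alpha_2,\beta_2)$ and then $(\alpha_1,\beta_1)$, generic change of generators yielding transformation laws and nullity invariants, fold-back of the degenerate cases via $e_1'=e_1+e_n$, and identification of the naturally graded loci with the type~I algebras $L_{n+2}^{1,\lambda}$, $L_{n+2}^{2,\lambda}$, $L_{n+2}^{2}$). If anything, your outline is more detailed than the paper's one-sentence proof.
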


\begin{proof}
	The proof is similar to the proof of Theorem \ref{2-dimensionalFn1}.
\end{proof}

\

\noindent{\bf Three-dimensional central extensions.} In this subsection we study three-dimensional non-split central extensions of the algebras $F_n^1$ and $F_n^2.$

From Proposition \ref{cocyclefil} and definition of central extension, we conclude that the table of multiplication of three-dimensional central extension of the algebra $F_n^1$ has the form:
$$\begin{array}{ll}
[e_1,e_1]=e_{3},&\\[1mm]
 [e_2,e_1]=e_{3}+\alpha_1x_1+\beta_1x_2+\gamma_1x_3,&\\[1mm]
[e_i,e_1]=e_{i+1},& 3\leq i\leq n-1,\\[1mm]
[e_n,e_1]=\alpha_2x_1+\beta_2x_2+\gamma_2x_3,&\\[1mm]
[e_1,e_2]=\alpha_3x_1+\beta_3x_2+\gamma_3x_3,&\\[1mm]
[e_2,e_2]=\alpha_4x_1+\beta_4x_2+\gamma_4x_3.&
\end{array}$$

Making the change of basis
$$e_1^\prime=e_1, \
e_i^\prime=e_{i+1},\ 2\leq i\leq n-1, \ e_n^\prime=e_2, \
e_{n+j}^\prime=x_j, \ 1\leq j \leq 3,$$
we obtain
$$\left\{\begin{array}{ll}
[e_i,e_1]=e_{i+1},& 1\leq i\leq n-2,\\[1mm]
[e_n,e_1]=e_{2}+\alpha_1e_{n+1}+\beta_1e_{n+2}+\gamma_1e_{n+3},&\\[1mm]
[e_{n-1},e_1]=\alpha_2e_{n+1}+\beta_2e_{n+2}+\gamma_2e_{n+3},&\\[1mm]
[e_1,e_n]=\alpha_3e_{n+1}+\beta_3e_{n+2}+\gamma_3e_{n+3}, &\\[1mm]
[e_n,e_n]=\alpha_4e_{n+1}+\beta_4e_{n+2}+\gamma_4e_{n+3}.&
\end{array}\right.$$

\begin{thm}\label{3-dimensionalFilFn1} An arbitrary three-dimensional non-split central extension of the algebra $F_n^1$ is isomorphic to one of the following pairwise non-isomorphic algebras:

\begin{itemize}
	\item $P(0,0,0),$ $P(1,0,0),$ $P(0,0,1),$ $P(1,0,1),$ $P(0,0,2),$ $P(1,0,2),$
	$P(0,1,\gamma_4), \ \gamma_4\neq 1,$  $P(0,2,1),$ $P(0,2,2),$ $P(0,1,1),$
	where
	$$P(\alpha_4,\beta_4,\gamma_4):\left\{\begin{array}{ll}
	[e_i,e_1]=e_{i+1},& 1\leq i\leq n-1,\\[1mm]
	[e_{n+1},e_1]=e_{2}+e_{n+2},&\\[1mm]
	[e_1,e_{n+1}]=e_{n+3}, &\\[1mm]
	[e_{n+1},e_{n+1}]=\alpha_4e_{n}+\beta_4e_{n+2}+\gamma_4e_{n+3},&
	\end{array}\right.$$
	
	\item
	$$P^*:\left\{\begin{array}{lll}
	[e_i,e_1]=e_{i+1},& 1\leq i\leq n-2,&\\[1mm]
	[e_n,e_1]=e_{2}+e_{n+1},&\\[1mm]
	[e_1,e_n]=e_{n+2},&\\[1mm]
	[e_n,e_n]=e_{n+3}.&
	\end{array}\right.$$
\end{itemize}	
\end{thm}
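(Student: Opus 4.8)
The plan is to follow the scheme of the proof of Theorem \ref{2-dimensionalFn1} verbatim, adding the extra book-keeping forced by the third central direction.

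First I would record the reduction coming from non-splitness. By Proposition \ref{cocyclefil}, modulo coboundaries the extension is completely determined by the four vectors $v_1=(\alpha_1,\beta_1,\gamma_1)$, $v_2=(\alpha_2,\beta_2,\gamma_2)$, $v_3=(\alpha_3,\beta_3,\gamma_3)$, $v_4=(\alpha_4,\beta_4,\gamma_4)$ attached respectively to the products $[e_n,e_1]$, $[e_{n-1},e_1]$, $[e_1,e_n]$, $[e_n,e_n]$ in the relabelled basis above. A one-dimensional central summand can be split off precisely when these four vectors fail to span $V\cong\mathbb{C}^3$, so the hypothesis that the extension is non-split is \emph{equivalent} to $\langle v_1,v_2,v_3,v_4\rangle=V$. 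This spanning condition is exactly what licenses the strong normalizations below, and it also explains why the answer is so much cleaner than in the two-dimensional case: with three directions all forced into play, the surviving algebras are genuinely new rather than members of the earlier quasi-filiform list.

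Next I would split on whether $v_2\neq 0$. If $v_2\neq 0$, a linear change in the centre turns $[e_{n-1},e_1]$ into $e_{n+1}$; absorbing the $e_{n+1}$-component of $v_1$ by $e_n\mapsto e_n-\alpha_1 e_{n-1}$ and then interchanging $e_n\leftrightarrow e_{n+1}$ extends the principal chain to $[e_i,e_1]=e_{i+1}$ for $1\le i\le n-1$. The new distinguished generator satisfies $[e_{n+1},e_1]=e_2+(\text{central})$, and the spanning condition, together with corrections of $e_{n+1}$ and further changes in $\langle e_{n+2},e_{n+3}\rangle$, lets me normalize to $[e_{n+1},e_1]=e_2+e_{n+2}$ and $[e_1,e_{n+1}]=e_{n+3}$; this places the algebra in the family $P(\alpha_4,\beta_4,\gamma_4)$, leaving only $[e_{n+1},e_{n+1}]=\alpha_4 e_n+\beta_4 e_{n+2}+\gamma_4 e_{n+3}$ to be reduced. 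Organizing the degenerate sub-cases (for instance when the central part of $v_1$ or of $v_3$ vanishes) and showing that they all collapse into this family is part of the work here. If instead $v_2=0$, then $v_1,v_3,v_4$ must already form a basis of $V$; renaming them $e_{n+1},e_{n+2},e_{n+3}$ gives $[e_n,e_1]=e_2+e_{n+1}$, $[e_1,e_n]=e_{n+2}$, $[e_n,e_n]=e_{n+3}$, which is exactly $P^{*}$, and the possibility $v_1=0$ is excluded since then $v_3,v_4$ span at most a plane.

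The heart of the argument, and the step I expect to be the \emph{main obstacle}, is the reduction of the triple $(\alpha_4,\beta_4,\gamma_4)$ inside $P(\alpha_4,\beta_4,\gamma_4)$. I would introduce the general admissible change of generators $e_1'=\sum A_i e_i$, $e_{n+1}'=\sum B_i e_i$, exactly as in Case 1.2 of Theorem \ref{2-dimensionalFn1}, derive the transformation rules for $(\alpha_4',\beta_4',\gamma_4')$ together with the non-degeneracy constraint on $A_1,A_{n+1},B_{n+1}$, and then isolate the nullity invariants: the vanishing or non-vanishing of $\alpha_4$, and the three-coordinate analogues of the expressions $\beta_4-1$ and $\alpha_4-\beta_4$ that governed the earlier classification. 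Peeling these off in turn should produce precisely the representatives $P(0,0,0)$, $P(1,0,0)$, $P(0,0,1)$, $P(1,0,1)$, $P(0,0,2)$, $P(1,0,2)$, $P(0,1,\gamma_4)$ with $\gamma_4\neq 1$, $P(0,2,1)$, $P(0,2,2)$ and $P(0,1,1)$. Because three parameters are now coupled rather than two, the orbit combinatorics are heavier than in Theorem \ref{2-dimensionalFn1}, and the bulk of the labour lies in verifying exhaustiveness and pairwise inequivalence. I would close by separating $P^{*}$ from the whole $P$-family through the lower central dimensions, noting that $\dim L^3=n-2$ for every $P$-algebra whereas $\dim L^3=n-3$ for $P^{*}$, while inside the $P$-family the recorded nullity invariants distinguish the canonical triples.
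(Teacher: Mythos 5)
Your proposal follows essentially the same route as the paper's proof: the same spanning criterion for non-splitness, the same primary case split on $(\alpha_2,\beta_2,\gamma_2)$ being zero or not, the same normalizations leading to the family $P(\alpha_4,\beta_4,\gamma_4)$ in the first case and to $P^*$ in the second, and the same reduction of the triple $(\alpha_4,\beta_4,\gamma_4)$ via the general change of generators and nullity invariants borrowed from Case 1.2 of Theorem \ref{2-dimensionalFn1}. The only minor differences are cosmetic: the paper disposes of the degenerate sub-case $(\beta_1,\gamma_1)=(0,0)$ by deriving a contradiction with non-splitness (it is empty rather than ``collapsing into the family''), and it leaves implicit the $P$ versus $P^*$ distinction that you make explicit via $\dim L^3$.
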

\begin{proof} Consider the cases.

\

\noindent\textbf{Case 1.} $(\alpha_2,\beta_2,\gamma_2)\neq(0,0,0)$.
Without loss of generality, one can assume that $\alpha_2\neq0$ and $[e_{n-1},e_1]=e_{n+1}.$ Taking
$e_n^\prime=e_n-\alpha_1e_{n-1},$ we get $\alpha_1=0.$

Making the following change of basis:
$$e_i^\prime=e_{i},\ 1\leq i\leq n-1, \ e_n^\prime=e_{n+1},\ e_{n+1}^\prime=e_{n}, \
\ e_{n+2}^\prime=e_{n+2}, \ \ e_{n+3}^\prime=e_{n+3},$$
we transform the table of multiplication to the form:
$$\begin{array}{ll}
[e_i,e_1]=e_{i+1},& 1\leq i\leq n-1,\\[1mm]
[e_{n+1},e_1]=e_{2}+\beta_1e_{n+2}+\gamma_1e_{n+3},&\\[1mm]
[e_1,e_{n+1}]=\alpha_3e_{n}+\beta_3e_{n+2}+\gamma_3e_{n+3}, &\\[1mm]
[e_{n+1},e_{n+1}]=\alpha_4e_{n}+\beta_4e_{n+2}+\gamma_4e_{n+3}.&
\end{array}$$

If $\beta_i=0$ or $\gamma_i=0$ for any $i$, then we obtain a split Leibniz algebra. Therefore, we consider the case of $\beta_i\neq 0$ and $\gamma_j\neq0$ for some $i$ and $j.$

\

\textbf{Case 1.1.} $(\beta_1,\gamma_1)\neq(0,0).$ Without loss of generality, one can assume that $\beta_1\neq0$ and $[e_{n+1},e_1]=e_2+e_{n+2}.$ Hence, we have the following algebra:
$$\begin{array}{ll}
[e_i,e_1]=e_{i+1},& 1\leq i\leq n-1,\\[1mm]
[e_{n+1},e_1]=e_{2}+e_{n+2},&\\[1mm]
 [e_1,e_{n+1}]=\alpha_3e_{n}+\beta_3e_{n+2}+\gamma_3e_{n+3},&\\[1mm]
 [e_{n+1},e_{n+1}]=\alpha_4e_{n}+\beta_4e_{n+2}+\gamma_4e_{n+3},&
\end{array}$$
with $(\gamma_3,\gamma_4)\neq(0,0).$

\begin{itemize}
	
\item $\gamma_3\neq0.$ Then we may assume $[e_{1},e_{n+1}]=e_{n+3}$ (by replacing $e'_{n+3}=\alpha_3e_{n}+\beta_3e_{n+2}+\gamma_3e_{n+3}$). Thus, we have the family of algebras:
$$\begin{array}{ll}
[e_i,e_1]=e_{i+1},& 1\leq i\leq n-1,\\[1mm]
[e_{n+1},e_1]=e_{2}+e_{n+2},& \\[1mm]
[e_1,e_{n+1}]=e_{n+3},&\\[1mm]
[e_{n+1},e_{n+1}]=\alpha_4e_{n}+\beta_4e_{n+2}+\gamma_4e_{n+3}.&
\end{array}$$

Taking the general change of generator basis elements and applying similar arguments as in {\bf Case 1.2} of Theorem \ref{2-dimensionalFn1} we obtain expressions for the new parameters:
$$\begin{array}{lr}
\alpha_4^\prime=\displaystyle\frac{(A_1+A_{n+1})(A_1\alpha_4+(A_{n-1}-B_{n-1})\beta_4)}{A_1^{n-2}(A_1^2+A_1A_{n+1}\beta_4+A_1A_{n+1}\gamma_4+A_{n+1}^2\beta_4)},&\\[3mm]
\beta_4^\prime=\displaystyle\frac{(A_1+A_{n+1})^2\beta_4}{A_1^2+A_1A_{n+1}\beta_4+A_1A_{n+1}\gamma_4+A_{n+1}^2\beta_4},&\\[3mm]
\gamma_4^\prime=\displaystyle\frac{(A_1+A_{n+1})(A_1\gamma_4+A_{n+1}\beta_4)}
{A_1^2+A_1A_{n+1}\beta_4+A_1A_{n+1}\gamma_4+A_{n+1}^2\beta_4},&
\end{array}$$
with restriction $$A_1(A_1+A_{n+1})(A_1^2+A_1A_{n+1}\beta_4+A_1A_{n+1}\gamma_4+A_{n+1}^2\beta_4)\neq0.$$

Consider all possible cases.

\begin{itemize}
\item[$a)$] $\beta_4=0$. Then $\beta_4^\prime=0.$

\begin{itemize}
\item[$a.1)$] $\gamma_4=0$. Then $\alpha_4^\prime=\frac{(A_1+A_{n+1})\alpha_4}{A_1^{n-1}}.$

If $\alpha_4=0,$ then $\alpha_4^\prime=0,$ otherwise,
putting $A_{n+1}= \frac{A_1^{n-1}-A_1\alpha_4}{\alpha_4},$ we get
$\alpha_4^\prime=1,$ and the algebras
$P(0, 0, 0),$ $P(1, 0, 0).$

\item[$a.2)$] $\gamma_4\neq0.$ It is remarkable that the following expression $\gamma_4^\prime-1=\frac{A_1(\gamma_4-1)}{A_1+A_{n+1}\gamma_4}$ deduce nullity invariant.

If $\gamma_4=1,$ then
$\alpha_4^\prime=\frac{\alpha_4}{A_1^{n-2}}, \ \ \gamma_4^\prime=1.$

If $\alpha_4=0,$ then we have the algebra $P(0,0,1)$ and when $\alpha_4\neq0,$  putting $A_1=\sqrt[n-2]{\alpha_4}$,
we get the algebra $P(1,0,1).$

\

If  $\gamma_4\neq1,$ then choosing
$A_{n+1}= \frac{A_1(\gamma_4-2)}{\gamma_4}$, we deduce
$\gamma_4^\prime=2$ and $\alpha_4^\prime=\frac{2\alpha_4}{\gamma_4A_1^{n-2}}.$
In the case of $\alpha_4=0,$ we have the algebra $P(0,0,2).$ In the opposite case (that is  $\alpha_4\neq0$) choosing $A_1=\sqrt[n-2]{\frac{2\alpha_4}{\gamma_4}}$, we have
$\alpha_4^\prime=1$ and the algebra $P(1,0,2).$
\end{itemize}

\item[$b)$] $\beta_4\neq0$. Putting
$A_{n-1}-B_{n-1}=-\frac{A_1\alpha_4}{\beta_4}$ we deduce
$\alpha_4^\prime=0.$ The equalities
$$\begin{array}{l}
\beta_4^\prime-\gamma_4^\prime=\displaystyle\frac{A_1(A_1+A_{n+1})(\beta_4-\gamma_4)}{A_1^2+A_1A_{n+1}\beta_4+A_1A_{n+1}\gamma_4+A_{n+1}^2\beta_4},\\[3mm]
 \gamma_4^\prime-1=\displaystyle\frac{A_1^2(\gamma_4-1)}{A_1^2+A_1A_{n+1}\beta_4+A_1A_{n+1}\gamma_4+A_{n+1}^2\beta_4},
 \end{array}$$
add two more nullity invariants.

Consider the following subcases:
\begin{itemize}
\item[$b.1)$] $\beta_4-\gamma_4\neq0$ and
$\gamma_4\neq1.$ Choosing
$A_{n+1}=\frac{A_1(1-\beta_4)}{\beta_4-\gamma_4}$, we get
$\beta_4^\prime=1,$ $\gamma_4^\prime=\frac{\beta_4\gamma_4-\gamma_4^2+\beta_4-\beta_4^2}{\beta_4(1-\gamma_4)}$ and the algebra $P(0,1,\gamma_4),$ where $\gamma_4\neq1.$

\item[$b.2)$] $\beta_4-\gamma_4\neq0$ and
$\gamma_4=1.$ Setting $A_{n+1}=\frac{(\beta_4-2)A_1}{\beta_4}$ we derive
$\alpha_4^\prime=2, \ \gamma'_4=1$ and the algebra $P(0,2,1).$

\item[$b.3)$] $\beta_4-\gamma_4=0$ and $\gamma_4\neq1.$ Putting  $A_{n+1}=\frac{A_1(-\beta_4+\sqrt{2\beta_4^2-2\beta_4})}{\beta_4}$,
we obtain $\beta_4^\prime=2$ and the algebra $P(0,2,2).$

\item[$b.4)$] $\beta_4-\gamma_4=0$ and $\gamma_4=1.$ Then we get $\beta_4^\prime=\gamma_4^\prime=1$ and the algebra $P(0,1,1).$
\end{itemize}
\end{itemize}

\item $\gamma_3=0$. It implies $\gamma_4\neq0$.
Thus, we have the following family of algebras:
$$\begin{array}{ll}
[e_i,e_1]=e_{i+1},& 1\leq i\leq n-1,\\[1mm]
[e_{n+1},e_1]=e_{2}+e_{n+2},&\\[1mm]
[e_1,e_{n+1}]=\alpha_3e_{n}+\beta_3e_{n+2},&\\[1mm]
 [e_{n+1},e_{n+1}]=\alpha_4e_{n}+\beta_4e_{n+2}+\gamma_4e_{n+3}.&
\end{array}$$

Let us take the general change of generator basis elements:
$$e_1^\prime=A_1e_1+A_{n+1}e_{n+1}, \ \
e_{n+1}^\prime=(A_1+A_{n+1})e_{n+1}, \ \mbox{with} \ A_1(A_1+A_{n+1})\neq0.
$$

Let us take the general change of generator basis elements:
$$e_1^\prime=A_1e_1+A_{n+1}e_{n+1}, \ \
e_{n+1}^\prime=(A_1+A_{n+1})e_{n+1}.$$
Similar to above we derive:

$$e_2^\prime=[e_1^\prime,e_1^\prime]=A_1(A_1+A_{n+1})e_2+
(A_{1}A_{n+1}\alpha_3+A_{n+1}^2\alpha_4)e_{n}+$$
$$
(A_{1}A_{n+1}+A_{1}A_{n+1}\beta_3+A_{n+1}^2\beta_4)e_{n+2}+
A_{n+1}^2\gamma_4e_{n+3}$$ and
$$e_t^\prime=A_1^{t-1}(A_1+A_{n+1})e_t, \ \ 3\leq t\leq n.$$

Since $e_{n+2}^\prime=[e_{n+1}^\prime,e_1^\prime]-e_{2}^\prime$, then
$$e_{n+2}^\prime=(A_{1}A_{n+1}\alpha_4-A_1A_{n+1}\alpha_3)e_{n}+
(A_{1}^2+A_{1}A_{n+1}\beta_4-A_1A_{n+1}\beta_3)e_{n+2}+A_1A_{n+1}\gamma_4e_{n+3}.$$

We have
$$[e_{1}^\prime,e_{n+1}^\prime]=(A_1+A_{n+1})(A_{1}\alpha_3+A_{n+1}\alpha_4)e_{n}+
(A_1+A_{n+1})(A_{1}\beta_3+A_{n+1}\beta_4)e_{n+2}+(A_1+A_{n+1})A_{n+1}\gamma_4e_{n+3}.$$

Consider
$$\left|%
\begin{array}{cc}
  A_{1}^2+A_{1}A_{n+1}\beta_4-A_1A_{n+1}\beta_3 & A_1A_{n+1}\gamma_4 \\[1mm]
  (A_1+A_{n+1})(A_{1}\beta_3+A_{n+1}\beta_4)&(A_1+A_{n+1})A_{n+1}\gamma_4 \\
\end{array}%
\right|=A_1A_{n+1}(A_1+A_{n+1})\gamma_4(A_1-A_{n+1}\beta_3-A_1\beta_3)\neq0.$$

Therefore, we conclude that $e'_{n+2}$ and $[e'_1,e'_{n+1}]$ are lineally independent, which satisfy the conditions of the case $\gamma_3\neq0.$
\end{itemize}

\

\textbf{Case 1.2.} $(\beta_1,\gamma_1)=(0,0).$
Note that $\beta_3\gamma_4-\beta_4\gamma_3\neq0,$ because otherwise the algebra is split.

Let us make the following change of basis:
$$e_1^\prime=e_1+(\alpha_4-\alpha_3)e_{n-1}+e_{n+1}, \
e_2^\prime=2e_2+2\alpha_4e_n+(\beta_3+\beta_4)e_{n+2}+(\gamma_3+\gamma_4)e_{n+3},$$
$$e_i^\prime=2e_i, \ 3\leq i\leq n+1, \ e_{n+2}^\prime=e_{n+2}, \ e_{n+3}^\prime=e_{n+3}.$$
Then we obtain the following family of algebras:
$$\begin{array}{ll}
[e_i,e_1]=e_{i+1},& 1\leq i\leq n-1,\\[1mm]
[e_{n+1},e_1]=e_{2}+(\beta_4-\beta_3)e_{n+2}+(\gamma_4-\gamma_3)e_{n+3},&\\[1mm]
[e_1,e_{n+1}]=(\alpha_3+\alpha_4)e_{n}+2(\beta_3+\beta_4)e_{n+2}+2(\gamma_3+\gamma_4)e_{n+3},&\\[1mm]
[e_{n+1},e_{n+1}]=2\alpha_4e_{n}+4\beta_4e_{n+2}+4\gamma_4e_{n+3},&
\end{array}$$

The assumption $(\beta_1,\gamma_1)=(0,0)$ implies $\beta_3=\beta_4\neq0$ and $\gamma_3=\gamma_4\neq0.$ But this is a contradiction to restriction $\beta_3\gamma_4-\beta_4\gamma_3\neq0.$

\

\noindent\textbf{Case 2.} $(\alpha_2,\beta_2,\gamma_2)=(0,0,0)$ and
$det\left(%
\begin{array}{ccc}
  \alpha_1&\beta_1&\gamma_1 \\
   \alpha_3&\beta_3&\gamma_3 \\
   \alpha_4&\beta_4&\gamma_4 \\
\end{array}%
\right)\neq0$. Without loss of generality we can assume that
$\alpha_1\beta_3\gamma_4\neq0,$ from which we have the algebra $P^*.$
\end{proof}

Now, we present three-dimensional central extensions of the algebra $F_n^2.$

\begin{thm}
	Three-dimensional non-split central extension of the algebra $F_n^2$ is isomorphic to one of the following pairwise non-isomorphic algebras:
	\begin{itemize}
		\item $Q(0,0,0,1,0),$ $Q(1,0,0,1,0),$  $Q(0,0,0,1,1),$ $Q(1,0,0,1,1),$ $Q(0,0,1,1,\gamma_4),\ \gamma_4\in \mathbb{C};$ $Q(0,1,0,0,1),$

		where
				$$Q(\alpha_4,\beta_3,\beta_4,\gamma_3,\gamma_4):\left\{\begin{array}{ll}
		[e_i,e_1]=e_{i+1},& 1\leq i\leq n-1,\\[1mm]
		[e_{n+1},e_1]=e_{n+2},&\\[1mm]
		[e_1,e_{n+1}]=\beta_3 e_{n+2}+\gamma_3 e_{n+3}, &\\[1mm]
		[e_{n+1},e_{n+1}]=\alpha_4e_{n}+\beta_4e_{n+2}+\gamma_4e_{n+3},&
		\end{array}\right.$$
		
		\item
$$Q^*:\left\{\begin{array}{ll}
		[e_i,e_1]=e_{i+1},& 1\leq i\leq n-2,\\[1mm]
		[e_n,e_1]=e_{n+1},&\\[1mm]
		[e_1,e_n]=e_{n+2},&\\[1mm]
		[e_n,e_n]=e_{n+3}.&
		\end{array}\right.$$
	\end{itemize}

\end{thm}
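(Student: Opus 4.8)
The plan is to mirror the structure of the proof of Theorem \ref{2-dimensionalFilFn1} (the $F_n^1$ case), since the statement explicitly asserts the two proofs are analogous. Starting from the three-dimensional central extension of $F_n^2$ written in the transformed basis $\{e_1,\dots,e_n,e_{n+1},e_{n+2},e_{n+3}\}$, the first bifurcation is on the triple $(\alpha_2,\beta_2,\gamma_2)$, the coefficients of $[e_{n-1},e_1]$. I would split into \textbf{Case 1}, where $(\alpha_2,\beta_2,\gamma_2)\neq(0,0,0)$, and \textbf{Case 2}, where this triple vanishes. In Case 1 one may assume $\alpha_2\neq0$, normalize $[e_{n-1},e_1]=e_{n+1}$, absorb $\alpha_1$ by the change $e_n'=e_n-\alpha_1e_{n-1}$, and then swap $e_n'=e_{n+1},\ e_{n+1}'=e_n$ to reach a presentation in which $[e_{n+1},e_1]$ carries only the $e_{n+2},e_{n+3}$ tails. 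This produces the family $Q(\alpha_4,\beta_3,\beta_4,\gamma_3,\gamma_4)$. The essential difference from the $F_n^1$ computation is that here $[e_{n+1},e_1]=e_{n+2}$ (rather than $e_2+e_{n+2}$), reflecting that in $F_n^2$ the product $[e_2,e_1]$ already lands in the center, so the quasi-filiform reduction behaves slightly differently.

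The core of the work is then the orbit analysis under the admissible change of basis. I would take a general change of generators $e_1'=A_1e_1+A_{n+1}e_{n+1}$ and $e_{n+1}'=B_1e_1+\cdots+B_{n+1}e_{n+1}$, propagate it through the relations to express $e_2',\dots,e_n'$ and the central generators $e_{n+2}',e_{n+3}'$, and impose the structural constraints (the vanishing of $[e_{n+2}',e_1']$ and $[e_{n+3}',e_1']$, plus the requirement that $e_{n+2}'$ and $[e_1',e_{n+1}']$ together with $[e_{n+1}',e_{n+1}']$ span the appropriate central directions). This yields rational formulas for the transformed parameters $\alpha_4',\beta_3',\beta_4',\gamma_3',\gamma_4'$, exactly parallel to equations $(1)$--$(4)$ in Theorem \ref{2-dimensionalFn1}. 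From these formulas I would extract the relevant nullity invariants, case-split on which of $\gamma_3,\gamma_4,\beta_4$ vanish, and normalize surviving parameters by choosing $A_1$ as an appropriate root and $A_{n+1}$ to kill a chosen coefficient, thereby producing the listed representatives $Q(0,0,0,1,0)$, $Q(1,0,0,1,0)$, $Q(0,0,0,1,1)$, $Q(1,0,0,1,1)$, $Q(0,0,1,1,\gamma_4)$, and $Q(0,1,0,0,1)$.

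For \textbf{Case 2}, with $(\alpha_2,\beta_2,\gamma_2)=(0,0,0)$, the relevant non-degeneracy condition is that the $3\times3$ matrix with rows $(\alpha_1,\beta_1,\gamma_1)$, $(\alpha_3,\beta_3,\gamma_3)$, $(\alpha_4,\beta_4,\gamma_4)$ has nonzero determinant; otherwise the three central images $[e_n,e_1],[e_1,e_n],[e_n,e_n]$ are linearly dependent and some central direction splits off, contradicting non-splitness. When the determinant is nonzero one can, after scaling, assume $\alpha_1\beta_3\gamma_4\neq0$ and diagonalize to reach exactly the algebra $Q^*$. I would also need to verify that the algebras obtained in Case 1 are pairwise non-isomorphic to $Q^*$, which follows from comparing characteristic sequences: the $Q$-family members have $[e_{n+1},e_1]=e_{n+2}$ central, while $Q^*$ keeps all three products $[e_n,e_1],[e_1,e_n],[e_n,e_n]$ independently central, giving a different Jordan structure for $R_{e_1}$.

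The hard part will be the bookkeeping in the general-change-of-basis computation of the transformed parameters and, more delicately, the careful identification of the correct nullity invariants that separate the orbits $Q(0,0,1,1,\gamma_4)$ for distinct $\gamma_4$ from the collapsed representatives. Establishing pairwise non-isomorphism within the $Q$-family (especially distinguishing the two sub-branches with $\gamma_4=0$ versus $\gamma_4\neq0$, and $\beta_4=0$ versus $\beta_4\neq0$) requires checking that no further admissible transformation identifies them; this is where one must confirm that each extracted invariant is genuinely preserved by the full stabilizer and not merely by the particular normalizing substitutions used. Since the author states only that the proof is similar to Theorem \ref{2-dimensionalFilFn1}, I expect the actual argument to compress all of this into the assertion that the same invariant-tracking yields precisely the listed normal forms.
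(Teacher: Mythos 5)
Your proposal follows exactly the route the paper takes: the published proof consists of the single remark that the argument is ``carried out similar to'' the three-dimensional $F_n^1$ case, and your plan---the bifurcation on $(\alpha_2,\beta_2,\gamma_2)$, the normalization $[e_{n-1},e_1]=e_{n+1}$ with absorption of $\alpha_1$, the general change of generators with invariant-tracking to produce the $Q$-family, and the determinant condition in the degenerate case yielding $Q^*$---is precisely that transplanted argument, including the correct observation that the only structural difference is the absence of the $e_2$ term in $[e_{n+1},e_1]$. Your write-up is in fact more detailed than the paper's own one-line proof, and no step in it conflicts with the $F_n^1$ template it imitates.
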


\begin{proof}
	The proof is carried out similar to the proof of Theorem \ref{3-dimensionalFilFn1}.
\end{proof}

\noindent{\bf Four-dimensional central extensions.} This subsection is devoted to the classification of four-dimensional non-split central extensions of the algebras $F_n^1$ and $F_n^2.$

Analogously, the table of multiplication of three-dimensional central extension of the algebra $F_n^1$ has the form:
$$\begin{array}{ll}
[e_1,e_1]=e_{3},& \\[1mm]
[e_i,e_1]=e_{i+1},& 3\leq i\leq n-1,\\[1mm]
[e_2,e_1]=e_{3}+\alpha_1x_1+\beta_1x_2+\gamma_1x_3+\delta_1x_4,&\\[1mm]
[e_n,e_1]=\alpha_2x_1+\beta_2x_2+\gamma_2x_3+\delta_2x_4,&\\[1mm]
[e_1,e_2]=\alpha_3x_1+\beta_3x_2+\gamma_3x_3+\delta_3x_4,&\\[1mm]
[e_2,e_2]=\alpha_4x_1+\beta_4x_2+\gamma_4x_3+\delta_4x_4.&
\end{array}$$

Making the change of basis
$$e_1^\prime=e_1, \
e_i^\prime=e_{i+1},\ 2\leq i\leq n-1, \ e_n^\prime=e_2, \
e_{n+1}^\prime=x_1,\ e_{n+2}^\prime=x_2,\ e_{n+3}^\prime=x_3,\ e_{n+4}^\prime=x_4,$$
we deduce
$$\left\{\begin{array}{ll}
[e_i,e_1]=e_{i+1},& 1\leq i\leq n-2,\\[1mm]
[e_n,e_1]=e_{2}+\alpha_1e_{n+1}+\beta_1e_{n+2}+\gamma_1e_{n+3}+\delta_1e_{n+4},&\\[1mm]
[e_{n-1},e_1]=\alpha_2e_{n+1}+\beta_2e_{n+2}+\gamma_2e_{n+3}+\delta_2e_{n+4},&\\[1mm]
[e_1,e_n]=\alpha_3e_{n+1}+\beta_3e_{n+2}+\gamma_3e_{n+3}+\delta_3e_{n+4}, &\\[1mm]
[e_n,e_n]=\alpha_4e_{n+1}+\beta_4e_{n+2}+\gamma_4e_{n+3}+\delta_4e_{n+4}.&
\end{array}\right.$$

\begin{thm}\label{4-dimensionalFilFn1}
	Four-dimensional non-split central extension of the algebra $F_n^1$ is isomorphic to the following algebra:
$$\left\{\begin{array}{ll}
[e_i,e_1]=e_{i+1},& 1\leq i\leq n-2,\\[1mm]
[e_{n-1},e_1]=e_{n+2},&\\[1mm]
[e_n,e_1]=e_{2}+e_{n+1},&\\[1mm]
[e_1,e_n]=e_{n+3},&\\[1mm]
[e_n,e_n]=e_{n+4}.&
\end{array}\right.$$
\end{thm}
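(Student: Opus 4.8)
The plan is to encode the cocycle data of the displayed family as a single $4\times 4$ matrix over the central directions and to reduce the whole problem to a rank condition. Writing the four central values of the cocycle as rows, set
$$M=\left(\begin{array}{cccc}
\alpha_1 & \beta_1 & \gamma_1 & \delta_1\\
\alpha_2 & \beta_2 & \gamma_2 & \delta_2\\
\alpha_3 & \beta_3 & \gamma_3 & \delta_3\\
\alpha_4 & \beta_4 & \gamma_4 & \delta_4
\end{array}\right),$$
so that the rows record the images in $V=\langle e_{n+1},e_{n+2},e_{n+3},e_{n+4}\rangle$ of the central parts of $[e_n,e_1]$, $[e_{n-1},e_1]$, $[e_1,e_n]$ and $[e_n,e_n]$ respectively. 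Since $\dim HL^2(F_n^1,\mathbb{F})=4$ by Proposition \ref{cocyclefil}, these are exactly the four independent cohomology directions, and the classification will hinge entirely on $\mathrm{rank}\,M$.

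First I would establish the non-split criterion: the extension $L_\theta$ is non-split if and only if $\mathrm{rank}\,M=4$. The image of $\theta$ is precisely the span of the four rows of $M$, and this span is the part of $V$ contained in $L_\theta^2$. If $\mathrm{rank}\,M=r<4$, choosing a complement $U$ of $\mathrm{Im}\,\theta$ inside $V$ exhibits $U$ as a space of central elements occurring in no product, whence $L_\theta\cong \widehat{L}_0\oplus\mathbb{C}^{\,4-r}$ with $\widehat{L}_0$ a lower-dimensional extension; that is, $L_\theta$ is split. Conversely, if the four rows span $V$, then $V\subseteq L_\theta^2$ and no central direction can be peeled off.

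Assuming $\mathrm{rank}\,M=4$, the four vectors $\theta(e_n,e_1),\theta(e_{n-1},e_1),\theta(e_1,e_n),\theta(e_n,e_n)$ form a basis of $V$. Declaring these to be the new $e_{n+1},e_{n+2},e_{n+3},e_{n+4}$ (equivalently, replacing $M$ by $M\cdot M^{-1}=I_4$) is an algebra isomorphism that leaves the $F_n^1$-structure untouched, since $V$ enters the multiplication only as bracket outputs. After this change the relevant products become $[e_n,e_1]=e_2+e_{n+1}$, $[e_{n-1},e_1]=e_{n+2}$, $[e_1,e_n]=e_{n+3}$, $[e_n,e_n]=e_{n+4}$, which is exactly the algebra in the statement. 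Thus every rank-$4$ matrix lies in a single $GL(V)$-orbit, yielding one isomorphism class, and it is non-split by the criterion above; note that here, in contrast to the lower-dimensional cases, the change of basis of $V$ alone suffices and no automorphism of $F_n^1$ is needed.

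The step I would treat most carefully, and the main obstacle, is the organization into cases mirroring Theorems \ref{2-dimensionalFn1} and \ref{3-dimensionalFilFn1}, where one splits on whether $(\alpha_2,\beta_2,\gamma_2,\delta_2)=0$ and studies the residual determinant. The point to verify is that full rank forces $(\alpha_2,\beta_2,\gamma_2,\delta_2)\neq 0$ (a vanishing second row drops $\mathrm{rank}\,M$ below $4$), and, more generally, that each degenerate subcase producing extra algebras in lower dimensions lowers $\mathrm{rank}\,M$ and hence yields only split extensions. Once this is verified, uniqueness of the non-split extension follows, as does the complementary fact that for $k\ge 5$ the inequality $\dim V>\dim HL^2(F_n^1,\mathbb{F})=4$ forces $\mathrm{Im}\,\theta\subsetneq V$, so every such extension splits.
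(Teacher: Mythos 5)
Your proposal is correct and follows essentially the same route as the paper: the paper's proof also reduces everything to the non-vanishing of the $4\times 4$ determinant of cocycle coefficients (split otherwise) and then normalizes the matrix to the identity by a change of basis in $V$, which is exactly your rank-$4$ criterion plus the substitution $M\mapsto M\cdot M^{-1}=I_4$. Your write-up merely supplies the details (the split/non-split equivalence and the verification that the basis change of $V$ is an isomorphism) that the paper leaves implicit.
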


\begin{proof}
Note that $det\left(%
\begin{array}{cccc}
  \alpha_1&\beta_1&\gamma_1&\delta_1 \\
  \alpha_2&\beta_2&\gamma_2&\delta_2 \\
  \alpha_3&\beta_3&\gamma_3&\delta_3 \\
  \alpha_4&\beta_4&\gamma_4&\delta_4 \\
\end{array}%
\right)\neq0$. Otherwise we get a split Leibniz algebra.

Without loss of generality, one can assume
$\alpha_1\beta_2\gamma_3\delta_4\neq0.$ Thus, we obtain the algebra of the theorem.
\end{proof}

Similar to the case of $F_n^1,$ we obtain the following result:
\begin{thm}\label{4-dimensionalFilFn1}
	Four-dimensional non-split central extension of the algebra $F_n^2$ is isomorphic to the following algebra:
$$\left\{\begin{array}{ll}
[e_i,e_1]=e_{i+1},& 1\leq i\leq n-2,\\[1mm]
[e_{n-1},e_1]=e_{n+2},&\\[1mm]
[e_n,e_1]=e_{n+1},&\\[1mm]
[e_1,e_n]=e_{n+3},&\\[1mm]
[e_n,e_n]=e_{n+4}.&
\end{array}\right.$$
\end{thm}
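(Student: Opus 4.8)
The approach mirrors the case $F_n^1$ treated just above: write the most general four-dimensional central extension, convert ``split versus non-split'' into a single determinantal condition, and then normalise by a change of basis inside the centre. By Proposition \ref{cocyclefil} the space $HL^2(F_n^2,V)$ is four-dimensional, with basis given by the classes of $\theta_{1,j},\theta_{2,j},\theta_{3,j},\theta_{4,j}$; hence, after the change of basis $e_1'=e_1$, $e_i'=e_{i+1}$ ($2\le i\le n-1$), $e_n'=e_2$, $e_{n+j}'=x_j$ ($1\le j\le 4$), the general four-dimensional central extension of $F_n^2$ is
$$\left\{\begin{array}{ll}
[e_i,e_1]=e_{i+1},& 1\leq i\leq n-2,\\[1mm]
[e_n,e_1]=\alpha_1e_{n+1}+\beta_1e_{n+2}+\gamma_1e_{n+3}+\delta_1e_{n+4},&\\[1mm]
[e_{n-1},e_1]=\alpha_2e_{n+1}+\beta_2e_{n+2}+\gamma_2e_{n+3}+\delta_2e_{n+4},&\\[1mm]
[e_1,e_n]=\alpha_3e_{n+1}+\beta_3e_{n+2}+\gamma_3e_{n+3}+\delta_3e_{n+4}, &\\[1mm]
[e_n,e_n]=\alpha_4e_{n+1}+\beta_4e_{n+2}+\gamma_4e_{n+3}+\delta_4e_{n+4},&
\end{array}\right.$$
with $V=\langle e_{n+1},e_{n+2},e_{n+3},e_{n+4}\rangle$ central. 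The feature that simplifies $F_n^2$ relative to $F_n^1$ is that here all four products $P_1=[e_n,e_1]$, $P_2=[e_{n-1},e_1]$, $P_3=[e_1,e_n]$, $P_4=[e_n,e_n]$ lie entirely in $V$, so the whole extension is encoded in the $4\times4$ matrix $M$ whose rows are $(\alpha_i,\beta_i,\gamma_i,\delta_i)$.

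Next I would detect splitness through $\det M$. Writing $L$ for the extension, its derived algebra is $L^2=\langle e_2,\dots,e_{n-1}\rangle\oplus\langle P_1,P_2,P_3,P_4\rangle$, so $V\cap L^2=\langle P_1,P_2,P_3,P_4\rangle$ and $\dim(V\cap L^2)=\operatorname{rank}M$. If $\operatorname{rank}M=r<4$, I would choose a complement $W$ with $V=(V\cap L^2)\oplus W$; then $W$ is central and meets $L^2$ trivially, so each basis vector of $W$ spans a one-dimensional ideal that is a direct summand, and the extension is split. Consequently a non-split four-dimensional extension forces $\det M\neq0$, which is exactly the determinantal hypothesis used in the $F_n^1$ argument.

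Finally, assume $\det M\neq0$. Then $P_1,P_2,P_3,P_4$ are linearly independent and form a basis of the four-dimensional space $V$, and the change of basis $e_{n+1}'=P_1$, $e_{n+2}'=P_2$, $e_{n+3}'=P_3$, $e_{n+4}'=P_4$ (carried out inside the centre, leaving $e_1,\dots,e_n$ fixed, hence preserving the Leibniz structure) turns the table above into
$$[e_n,e_1]=e_{n+1},\quad [e_{n-1},e_1]=e_{n+2},\quad [e_1,e_n]=e_{n+3},\quad [e_n,e_n]=e_{n+4},$$
together with the untouched relations $[e_i,e_1]=e_{i+1}$, $1\le i\le n-2$; this is precisely the algebra of the theorem. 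Since every admissible parameter choice collapses to this one form, the non-split extension is unique up to isomorphism, which is the meaning of the shorthand ``one can assume $\alpha_1\beta_2\gamma_3\delta_4\neq0$'' in the $F_n^1$ proof. I expect the only genuinely delicate point to be the equivalence of the middle paragraph: one must verify carefully that $\operatorname{rank}M<4$ really yields a trivial abelian direct summand, and conversely that when $\operatorname{rank}M=4$ the centre of $L$ equals $V$ so that no hidden splitting survives; once this equivalence between non-splitness and $\det M\neq0$ is in hand, the rest is routine linear algebra.
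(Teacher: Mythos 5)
Your proposal is correct and follows essentially the same route as the paper: write the general extension from the cocycle basis of Proposition \ref{cocyclefil}, observe that vanishing of the $4\times 4$ determinant forces a split algebra, and then use $\det M\neq 0$ to replace the basis of the centre by the four products, yielding the stated table. The paper's own proof (given for $F_n^1$ and invoked "similarly" for $F_n^2$) asserts these steps without justification, so your fleshed-out argument for the splitness criterion and the normalisation is simply a more detailed version of the same proof.
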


\

 Finally, we present the description of $k$-dimensional ($k\geq 5$) central extensions of naturally graded filiform non-Lie Leibniz algebras. In fact, all of them are split.

 \begin{thm}
 A	$k$-dimensional ($k\geq 5$) central extension of the algebras $F_n^1$ and $F_n^2$ is a split algebra.
 \end{thm}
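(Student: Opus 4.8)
The plan is to reduce an arbitrary $k$-dimensional central extension to a normal form dictated by Proposition~\ref{cocyclefil}, and then to peel off a nonzero abelian central summand whenever $k\geq 5$. First I would invoke Proposition~\ref{cocyclefil}: for $i=1,2$ the space $HL^2(F_n^i,\mathbb{C})$ is four-dimensional, a basis of representatives being the cocycles supported on the four pairs $(e_2,e_1)$, $(e_n,e_1)$, $(e_1,e_2)$, $(e_2,e_2)$. Consequently, every class in $HL^2(F_n^i,V)$ is represented by a cocycle $\theta$ whose only nonzero values occur on these four pairs, say $\theta(e_2,e_1)=v_1$, $\theta(e_n,e_1)=v_2$, $\theta(e_1,e_2)=v_3$, $\theta(e_2,e_2)=v_4$ with $v_1,v_2,v_3,v_4\in V$. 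Concretely, since the coboundaries listed in Proposition~\ref{cocyclefil} are exactly those supported on $e_3,\dots,e_n$, subtracting a suitable coboundary (equivalently, redefining the lifts of $e_3,\dots,e_n$ in $\widehat{L}$) annihilates all $V$-components of the products $[e_i,e_1]=e_{i+1}$ and of $[e_1,e_1]=e_3$, leaving the extension in precisely this normal form.

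Next I would consider the subspace $U=\langle v_1,v_2,v_3,v_4\rangle\subseteq V$ spanned by the four representing vectors. Plainly $\dim U\leq 4$, whereas $\dim V=k\geq 5$, so $U$ is a proper subspace and I may choose a complement $W$ with $V=U\oplus W$ and $\dim W=k-\dim U\geq 1$. Because $V$ is central in $\widehat{L}=F_n^i\oplus V$ and $\operatorname{Im}\theta\subseteq U$, the subspace $W$ occurs neither as a value of any product nor as a nontrivial factor of one. Hence $W$ is a central ideal that is an algebra direct summand, and the decomposition $\widehat{L}\cong (F_n^i\oplus U)_{\theta}\oplus W$ exhibits $\widehat{L}$ as the direct sum of a Leibniz algebra with the abelian algebra $W\cong\mathbb{C}^{\,k-\dim U}$. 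Since $\dim W\geq 1$, the extension $\widehat{L}$ is split, as claimed. The argument applies verbatim to both $F_n^1$ and $F_n^2$, since Proposition~\ref{cocyclefil} yields the same four-dimensional cohomology in each case.

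The only delicate point — and the main, rather mild, obstacle — is the passage from the cohomological statement to the normal form: I must check that subtracting the appropriate coboundary genuinely confines every $V$-valued product to the four distinguished pairs, and that the induced splitting $V=U\oplus W$ is an \emph{algebra} direct sum, which holds precisely because $V$ is central and $\operatorname{Im}\theta\subseteq U$. Once the normal form is secured, the dimension count $\dim U\leq 4<k$ makes the conclusion immediate. This also clarifies the threshold: in the lower-dimensional classifications the non-split extensions arise exactly when $v_1,v_2,v_3,v_4$ span all of $V$ (as encoded by the nonvanishing determinant conditions there), and four vectors can no longer span $V$ once $k\geq 5$.
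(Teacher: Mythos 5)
Your proof is correct and takes essentially the approach the paper itself relies on: the paper states this theorem without a written proof, but its reasoning in the four-dimensional case (non-splitness forced by the nonvanishing of the $4\times 4$ determinant of cocycle coefficients) is precisely your rank argument. Normalizing the cocycle via Proposition~\ref{cocyclefil} so that its image lies in the at most four-dimensional subspace $U\subseteq V$, and splitting off an abelian complement $W$ with $\dim W\geq k-4\geq 1$, is exactly the intended dimension count.
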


\section*{Acknowledgments}

The authors were supported by Ministerio de Econom\'ia y Competitividad (Spain), grant MTM2013-43687-P (European FEDER support included) and by V Plan Propio de Investigación of Sevilla University (VPPI-US). The last named author was also partially supported by the Grant No. 0828/GF4 of Ministry of Education and Science of the Republic of Kazakhstan.

\end{document}